\theoremstyle{plain}   
\newtheorem{theorem}{Theorem}[section]
\newtheorem{proposition}[theorem]{Proposition}
\newtheorem{lemma}[theorem]{Lemma}
\DeclareMathOperator{\Aut}{Aut}
\def\Cay{\mbox{\rm Cay}}
\DeclareMathOperator{\id}{id}
\def\Z{\ns Z}
\def\u{\mbox{\boldmath $u$}}
\def\x{\mbox{\boldmath $x$}}
\def\y{\mbox{\boldmath $y$}}
\def\z{\mbox{\boldmath $z$}}
\def\vecu{\mbox{\boldmath $u$}}
\def\vecv{\mbox{\boldmath $v$}}
\def\vec0{\mbox{\boldmath $0$}}
\def\A{\mbox{\boldmath $A$}}
\def\I{\mbox{\boldmath $I$}}
\def\I{\mbox{\boldmath $I$}}
\def\Z{\mbox{\boldmath $Z$}}
\def\1{\mbox{\boldmath $1$}}
\newcommand{\dist}{\mathrm{dist}}
\title{On large regular $(1,1,k)$-mixed graphs}
\author{
	C. Dalf\'o$^a$, G. Erskine$^b$, G. Exoo$^c$, M. A. Fiol$^d$,\\
 N. L\'opez$^a$, A. Messegu\'e$^a$, J. Tuite$^b$\\
	\\
	{\small $^a$Dept. de Matem\`atica, Universitat de Lleida, Catalonia}\\
	{\small {\tt \{cristina.dalfo,nacho.lopez\}@udl.cat}, {\tt  arnau.messegue@upc.edu}}\\
	{\small $^{b}$School of Mathematics and Statistics, Open University, Milton Keynes, UK}\\
	{\small {\tt \{grahame.erskine,james.t.tuite\}@open.ac.uk}}\\
{\small $^{c}$Dept. of Mathematics and Computer Science, Indiana State University, USA} \\
           {\small {\tt  ge@cs.indstate.edu}}\\
	{\small $^{d}$Dept. de Matem\`atiques, Universitat Polit\`ecnica de Catalunya, Barcelona, Catalonia} \\
	{\small Barcelona Graduate School of Mathematics} \\
           {\small Institut de Matem\`atiques de la UPC-BarcelonaTech (IMTech)}\\
           {\small {\tt miguel.angel.fiol@upc.edu}}}
\date{}
\begin{document}
	
\maketitle

\begin{abstract}
An  $(r,z,k)$-mixed graph $G$ has every vertex with undirected degree $r$, directed in- and out-degree $z$, and diameter $k$. In this paper, we  study the case $r=z=1$, proposing some new constructions of $(1,1,k)$-mixed graphs with a large number of vertices $N$. 
Our study is based on computer techniques for small values of $k$
and the use of graphs on alphabets for general $k$. In the former case, the constructions are either Cayley or lift graphs. In the latter case, some infinite families of $(1,1,k)$-mixed graphs are proposed with diameter of the order of  $2\log_2 N$.
\end{abstract}

\noindent{\em Keywords:} Mixed graph, Moore bound, Cayley graph, Lift graph.

\noindent{\em Mathematics Subject Classification:} 05C50, 05C20, 15A18, 20C30.

\section{Introduction}
The relationship between vertices or nodes in interconnection networks can be undirected or directed depending on whether the communication between nodes is two-way or only one-way. Mixed graphs arise in this case and in many other practical situations, where both kinds of connections are needed. Urban street networks are perhaps the most popular ones. 
Thus, a {\em mixed graph } $G=(V,E,A)$ has a set $V=V(G)=\{u_1,u_2,\ldots\}$ of vertices, a set $E=E(G)$ of edges or unordered pairs of vertices $\{u,v\}$, for $u,v\in V$, and a set $A=A(G)$ of arcs, directed edges, or ordered pair of vertices $uv\equiv (u,v)$. 
For a given vertex $u$, its {\em undirected degree} $r(u)$  is the number of edges incident to vertex $u$.
Moreover, its {\em out-degree} $z^+(u)$ is the number of arcs emanating from $u$, whereas its {\em in-degree} $z^-(u)$  is the number of arcs going to $u$.
If $z^+(u)=z^-(u)=z$ and $r(u)=r$, for all $u \in V$, then $G$ is said to be a {\em totally regular} $(r,z)$-mixed graph with {\em whole degree} $d=r+z$.

The distance from vertex $u$ to vertex $v$ is denoted by $\dist(u,v)$. Notice that, when the out-degree $z$ is not zero, the distance $\dist(u,v)$ is not necessarily equal to the distance $\dist(v,u)$.
 If the mixed graph $G$ has diameter $k$, its {\em distance matrix} $\A_i$, for $i=0,1,\ldots,k$, has entries
$(\A_i)_{uv}=1$ if $\dist(u,v)=i$, and $(\A_i)_{uv}=0$ otherwise. So, $\A_0=\I$ (the identity matrix) and $\A_1=\A$ (the adjacency matrix of $G$).

Mixed graphs were first considered in the context of the degree/diameter
problem by Bos\'ak \cite{b79}. 
The {\em degree/diameter problem} for mixed graphs reads as follows:
Given three natural numbers $r$, $z$, and $k$, find the largest possible number of vertices $N(r,z,k)$ in a
mixed graph $G$ with maximum undirected degree $r$, maximum directed out-degree $z$, and diameter $k$.

For mixed graphs, an upper bound for $N(r,z,k)$, known as a {\em Moore$($-like$)$ bound} $M(r,z,k)$, was obtained by Buset, El Amiri, Erskine, Miller, and P\'erez-Ros\'es \cite{baemp15} (also by Dalf\'o, Fiol, and L\'opez \cite{dfl18} with an alternative computation). 

\begin{theorem}[Buset, El Amiri, Erskine, Miller, and P\'erez-Ros\'es \cite{baemp15}]
\label{th:Moore}
The Moore bound for an $(r,z)$-mixed graph with diameter $k$ is
\begin{equation}
\label{eq:Moore}
M(r,z,k)=A\frac{u_1^{k+1}-1}{u_1-1}+B\frac{u_2^{k+1}-1}{u_2-1},
\end{equation}
where
\begin{align*}
u_1 &=\displaystyle{\frac{z+r-1-\sqrt{v}}{2}},\qquad u_2=\displaystyle{\frac{z+r-1+\sqrt{v}}{2}},\\
A &=\displaystyle{\frac{\sqrt{v}-(z+r+1)}{2\sqrt{v}}},\qquad
B=\displaystyle{\frac{\sqrt{v}+(z+r+1)}{2\sqrt{v}}},\\
v &=(z+r)^2+2(z-r)+1.
\end{align*}
\end{theorem}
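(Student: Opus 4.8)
The approach is the classical ``Moore tree'' counting argument, adapted to the mixed setting. Fix a vertex $u$ of $G$ and, for $i=0,1,\dots,k$, let $n_i$ be the number of vertices at distance exactly $i$ from $u$; since $\diam(G)=k$, the order of $G$ equals $\sum_{i=0}^k n_i$, so it suffices to bound $n_i\le t_i$ by a quantity $t_i$ independent of $G$ and then sum. To build the $t_i$, run a breadth-first search from $u$ and classify a vertex $v$ at distance $i\ge 1$ as being of \emph{type $E$} if it is joined by an undirected edge to some vertex at distance $i-1$, and of \emph{type $Z$} otherwise; in the latter case $v$ is reached from level $i-1$ only through arcs. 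A type-$E$ vertex has at most $r-1$ edges and at most $z$ arcs left with which to reach level $i+1$, while a type-$Z$ vertex has at most $r$ edges and $z$ arcs; moreover the level-$(i+1)$ vertices joined to level $i$ by an edge are exactly the type-$E$ ones, and those reached only by arcs are type $Z$.

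First I would turn this into a recursion. Writing $e_i$ and $a_i$ for the largest possible number of type-$E$ and type-$Z$ vertices at level $i$ (ignoring coincidences between vertices, which can only lower the true counts), and $t_i=e_i+a_i$, one obtains $e_0=0$, $a_0=1$ and, for every $i\ge 0$,
\begin{equation*}
e_{i+1}=(r-1)e_i+r\,a_i,\qquad a_{i+1}=z\,(e_i+a_i),
\end{equation*}
so that a straightforward elimination gives the second-order linear recurrence $t_{i+1}=(r+z-1)\,t_i+z\,t_{i-1}$ for $i\ge 1$, with $t_0=1$ and $t_1=r+z$. A routine induction on $i$ --- bounding at each level the type-$E$ and type-$Z$ counts separately --- shows $n_i\le t_i$ in any $(r,z)$-mixed graph.

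Next I would solve the recurrence. Its characteristic equation $x^2-(r+z-1)x-z=0$ has discriminant $(r+z-1)^2+4z$, which expands to exactly $v=(z+r)^2+2(z-r)+1$, so (assuming $v>0$, as happens whenever $z\ge 1$, in particular for $r=z=1$) the roots are the $u_1,u_2$ of the statement, with $u_1+u_2=r+z-1$ and $u_1-u_2=-\sqrt v$. Hence $t_i=\alpha\,u_1^{\,i}+\beta\,u_2^{\,i}$, and imposing $\alpha+\beta=t_0=1$ and $\alpha u_1+\beta u_2=t_1=r+z$ yields, after a short computation, $\alpha=A$ and $\beta=B$ with $A,B$ as in the theorem. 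Summing the two geometric progressions,
\begin{equation*}
\sum_{i=0}^k n_i\ \le\ \sum_{i=0}^k t_i\ =\ A\,\frac{u_1^{k+1}-1}{u_1-1}+B\,\frac{u_2^{k+1}-1}{u_2-1}=M(r,z,k).
\end{equation*}

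The algebra identifying the discriminant with $v$ and the constants with $A$ and $B$ is routine. The step that really needs care is the combinatorial one: pinning down, for \emph{every} mixed graph, that the type-$E$/type-$Z$ bookkeeping dominates $n_i$. In particular one must be careful that a type-$Z$ vertex, reached by an arc, is not forced to spend an out-arc on an arc back to its predecessor --- this relies on the usual convention that digons are forbidden (or regarded as edges) --- and that merging coincident vertices, whether within a level or across levels, never increases the counts. Getting this domination argument exactly right, rather than the eigenvalue computation, is where the proof must be anchored.
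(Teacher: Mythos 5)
Your proposal is correct: the BFS/Moore-tree count by ``reached through an edge'' versus ``reached through an arc'' vertices, the recurrences $e_{i+1}=(r-1)e_i+ra_i$, $a_{i+1}=z(e_i+a_i)$ giving $t_{i+1}=(r+z-1)t_i+zt_{i-1}$, and the identification of the characteristic roots and coefficients with $u_1,u_2,A,B$ all check out, and summing the levels yields \eqref{eq:Moore}. The paper itself does not prove Theorem \ref{th:Moore} but cites it from Buset et al.\ \cite{baemp15}; your derivation is essentially that standard argument (and it specializes, for $r=z=1$, to exactly the Fibonacci-type counts $a(\ell),b(\ell),c(\ell)$ the paper uses later), so there is nothing to add beyond noting that the digon caveat is harmless for an upper bound, since an out-arc returning to the predecessor only decreases the number of new vertices reached.
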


This bound applies whether or not $G$ is totally regular, but it is elementary to show that a Moore mixed graph must be totally regular.
Thus, a Moore $(r,z,k)$-mixed graph is a graph with diameter $k$, maximum undirected degree $r\geq 1$, maximum out-degree $z\geq 1$, and order given by $M(r,z,k)$.
An example of a Moore $(3,1,2)$-mixed graph is the Bos\'ak graph \cite{b79}, see Figure \ref{Bosak}.

  \begin{figure}[t]
    \begin{center}
        \includegraphics[width=5cm]{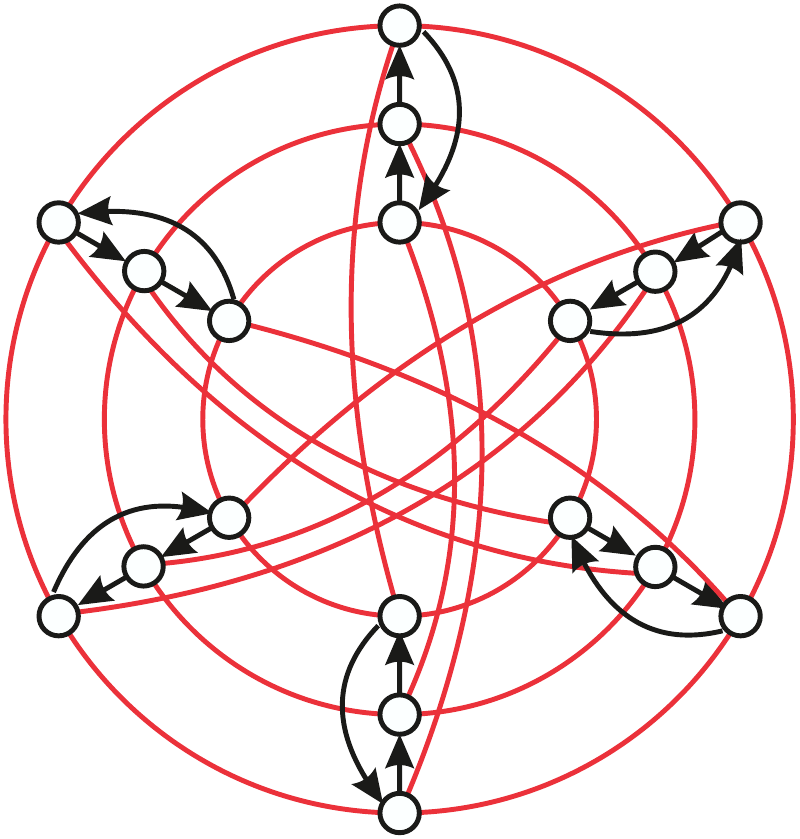}
    \end{center}
    \vskip-.5cm
    \caption{The Bos\'ak $(3,1)$-graph with diameter $k=2$ and $N=18$ vertices.}
    \label{Bosak}
\end{figure}

Bos\'ak \cite{b79} gave a necessary condition for the existence of a mixed Moore graph with diameter $k=2$. Such graphs have the property that for any ordered pair $(u,v)$ of vertices, there is a {\em unique walk of length at most $2$} between them.  In general, there are infinitely many pairs $(r,z)$ satisfying Bos\'ak necessary condition for which the existence of a mixed Moore graph is not known yet. Nguyen, Miller, and Gimbert \cite{nmg07} proved the existence and unicity of some  Moore mixed graphs of diameter 2.
L\'opez, Miret, and Fern\'andez, \cite{lmf15} proved that there is no Moore $(r,z,2)$-mixed graph when the pair $(r,z)$ equals $(3,3)$, $(3,4)$, or $(7,2)$.

For diameter $k \geq 3$, it was proved that mixed Moore graphs do not exist, see Nguyen, Miller, and Gimbert \cite{nmg07}. In the case of total regularity, this result also follows from the improved bound in Dalf\'o, Fiol, and L\'opez \cite{dfl18}, where it was shown that the order $N$ of an $(r, z)$-regular mixed graph G with diameter $k\ge 3$ satisfies 
\begin{equation}
N\le M(r,z,k)-r,
\label{improved-Moore}
\end{equation}
where $M(r,z,k)$ is given by \eqref{eq:Moore}.
In general, a mixed graph with maximum undirected degree $r$, maximum directed out-degree $z$, diameter $k$, and order $N=M(r,z,k)-\delta$ is said to have 
{\em defect} $\delta$. A mixed graph with defect one is called an {\em almost mixed Moore
graph}. Thus, the result in \eqref{improved-Moore} can be rephrased by saying that $r$ is a lower bound for the defect of the mixed graph.
In the case $r=z=1$, such a result was drastically improved by Tuite and Erskine \cite{te22} by showing that a lower bound $\delta(k)$ for the defect of a $(1,1)$-regular mixed graph with diameter $k\ge 1$ satisfies the recurrence
\begin{equation}
\delta(k+6)=\delta(k)+f_{k-1}+f_{k+4},
\label{improved-defect-recur}
\end{equation}
where the initial values of $\delta(k)$, for $k=1,\ldots,6$, are $0,1,1,2,3,5$, and $f_k$ are the Fibonacci numbers starting from $f_0=f_1=1$, namely,
$1,1,2,3,5,8,13,21,\ldots$ Alternatively, starting from $\delta(1)=0$ and $\delta(2)=1$, we have $\delta(k+2)=\delta(k+1)+\delta(k)$ if $k+2\not\equiv 1,2 \ (\textrm{mod}\ 6)$, and $\delta(k+2)=\delta(k+1)+\delta(k)+1$, otherwise.

For more results on degree/diameter problem for graphs, digraphs, and mixed graphs, see the comprehensive survey by Miller and \v{S}ir\'a\v{n} \cite{ms13}. 
For more results on mixed graphs, see Buset, L\'opez, and Miret \cite{blm17},
Dalf\'o \cite{Da19}, Dalf\'o, Fiol, and L\'opez \cite{dfl17,dfl18,dfl18bis}, Erskine \cite{e17}, J{\o}rgensen \cite{J15},
L\'opez, P\'erez-Ros\'es, and Pujol\`as \cite{lpp14}, Nguyen, Miller, and Gimbert \cite{nmg07}, and Tuite and Erskine \cite{tg19}.

In this paper, we deal with $(1,1,k)$-mixed graphs, that is,  mixed graphs with undirected degree $r=1$, directed out-degree $z=1$, and with diameter $k$.
Our study is based on computer techniques for small values of $k$,
and the use of graphs on alphabets for general $k$. In the former case, the constructions are either Cayley or lift graphs. In the latter case, some infinite families of $(1,1,k)$-mixed graphs are proposed with $N$ vertices and diameter $k$ of the order of  $2\log_2 N$. Most of the proposed constructions are closely related to line digraphs. Given a digraph $G$, its line digraph $LG$ has vertices representing the arcs of $G$, and vertex $x_1x_2$ is adjacent to vertex $y_1y_2$ in $LG$
if the arc $(x_1,x_2)$ is adjacent to the arc $(y_1,y_2)$ in $G$, that is, if $y_1=x_2$. The $k$-iterated line digraph is defined recursively as $L^kG=L^{k-1}(LG)$. Let $K_d^+$ be the complete symmetric
digraph with $d$ vertices with loops, and $K_{d+1}$ the complete symmetric digraph on $d+1$ vertices (in these complete graphs each edge is seen as a digon, or pair of opposite  arcs). Then, two well know families of iterated line digraphs are the De Bruijn digraphs $B(d,k)=L^k(K_d^+)$, and the Kautz digraphs $K(d,k)=L^k(K_{d+1})$.
Both $B(d,k)$ and $K(d,k)$ have diameter $k$ but De Bruijn digraphs have $d^k$ vertices, whereas Kautz digraphs have $d^k+d^{k-1}$ vertices.
See, for instance, 
Fiol, Yebra, and Alegre \cite{FiYeAl84}, and Miller and \v{S}ir\'a\v{n} \cite{ms13}.

\section{Some infinite families of $(1,1,k)$-mixed graphs}

In this section, we propose some infinite families of $(1,1,k)$-mixed graphs with exponential order. All of them have vertices with out-degree $z=1$. When, moreover, all the vertices have in-degree $1$, we refer to them as $(1,1,z)$-regular mixed graphs.
If we denote by $f(r,z,k)$ the order of a largest $(r,z,k)$-mixed graph, which is upper bounded by the (exponential) Moore bound $M(r,z,k)$, all the described graphs provide exponential lower bounds for $f(1,1,k)$.

Let us first give some basic properties of $(1,1,k)$-mixed graphs.
It is readily seen that the Moore bound satisfies the Fibonacci-type recurrence
$$
M(1,1,k)=M(1,1,k-1)+M(1,1,k-2)-2,
$$
starting from $M(1,1,0)=1$ and $M(1,1,1)=3$. From this, or just applying \eqref{eq:Moore}, we obtain that the corresponding Moore bound is
\begin{equation}
\label{eq:Moore(1,1,k)}
M(1,1,k)=\left(1-\frac{2}{\sqrt{5}}\right)\left(\frac{1-\sqrt{5}}{2}\right)^{k+1}+\left(1+\frac{2}{\sqrt{5}}\right)\left(\frac{1+\sqrt{5}}{2}\right)^{k+1}-2.
\end{equation}
The obtained values for $k=2,\ldots,16$ are shown in Table \ref{table4}. Then, for large values of $k$, the Moore bound $M(1,1,k)$
is of the order of
$$
M(1,1,k) \sim \left(1+\frac{2}{\sqrt{5}}\right)\left(\frac{1+\sqrt{5}}{2}\right)^{k+1}\approx 1.8944\cdot 1.6180^{k+1}.
$$

\subsection{The mixed graphs $E(n)$}

The first construction is the simplest one.
Given $n\ge 2$, the graph $E(n)$ is defined as follows.
As before, label the Fibonacci numbers so that $f_0 = f_1 = 1$.  Consider a Moore tree of radius
$n$ with base vertex $u_0$.  The set of vertices at distance $i$ from $u_0$ is 
referred to as the vertices at {\em level $i$}.  There are $f_{i+1}$ vertices at level $i$.
We can partition these vertices into two sets: $V_i$ contains the $f_{i}$ vertices at level $i$ incident through an arc from level $i-1$, and $W_i$ contains the $f_{i-1}$ vertices at level $i$ incident through an edge from level $i-1$.

To complete the graph, we must consider two cases, depending on whether $f_n$ is even or odd.
If $f_n$ is even, then we add a matching among the vertices of $V_n$ and add an arc from each
vertex in level $n$ to $u_0$.  In this case, the diameter is $2n$.  Note that the maximum distance occurs from a level-1 vertex to a level-t vertex on the opposite edge (where the two edges are based on the two level-1 vertices).

If $f_n$ is odd, then we must modify the construction slightly.
In this case, when we add a matching among the vertices of $V_n$, there is one
vertex $v_1$ of $V_n$ missed by the matching. So, we must add another vertex $v_2$, join this vertex
to $v_1$ by an edge, and then add an arc from $v_2$ to the base vertex $u_0$.  All other vertices
at level $n$ have arcs directly to $u_0$.
In this case, the diameter is $2n+1$, where the maximum distance
occurs from a level-1 vertex (in the edge {\em not} containing $v_1$) to $v_2$.

So, the graph $E(n)$ has diameter $2n$ or $2n+1$, and order $M(1,1,n)$ or $M(1,1,n) + 1$.
This bound is very weak for small diameters but at least it gives a first explicit
construction that gives an exponential lower bound.  
In the following subsections, we show that we can do it better.

\subsection{The mixed graphs $F(n)$}

Given $n\ge 2$, the $(1,1,k)$-mixed graph $F(n)$ has vertices labeled with
$a:x_1\ldots x_n$, where $a\in\{+1,-1\}$, with $x_i\in \mathbb{Z}_3$, and $x_{i+1}\neq x_i$ for $i=1,\ldots,n-1$. The adjacencies are as follows:
\begin{itemize}
	\item[$(i)$] $a:x_1x_2\ldots x_n\,\sim\, -a:x_1x_2\ldots x_n$ (edges);
	\item[$(ii)$] $a:x_1x_2\ldots x_n\,\rightarrow\, a:x_2x_3\ldots x_n(x_n+a)$ (arcs).
\end{itemize}
Thus, $F(n)$ has $3\cdot 2^n$ vertices, it is an out-regular graph but not in-regular since
vertices $a:x_1x_2\ldots x_n$ and $a:x'_1x_2\ldots x_n$ are both adjacent to $a:x_2\ldots (x_n+a)$.

\begin{figure}[t]
    \begin{center}
        \includegraphics[width=10cm]{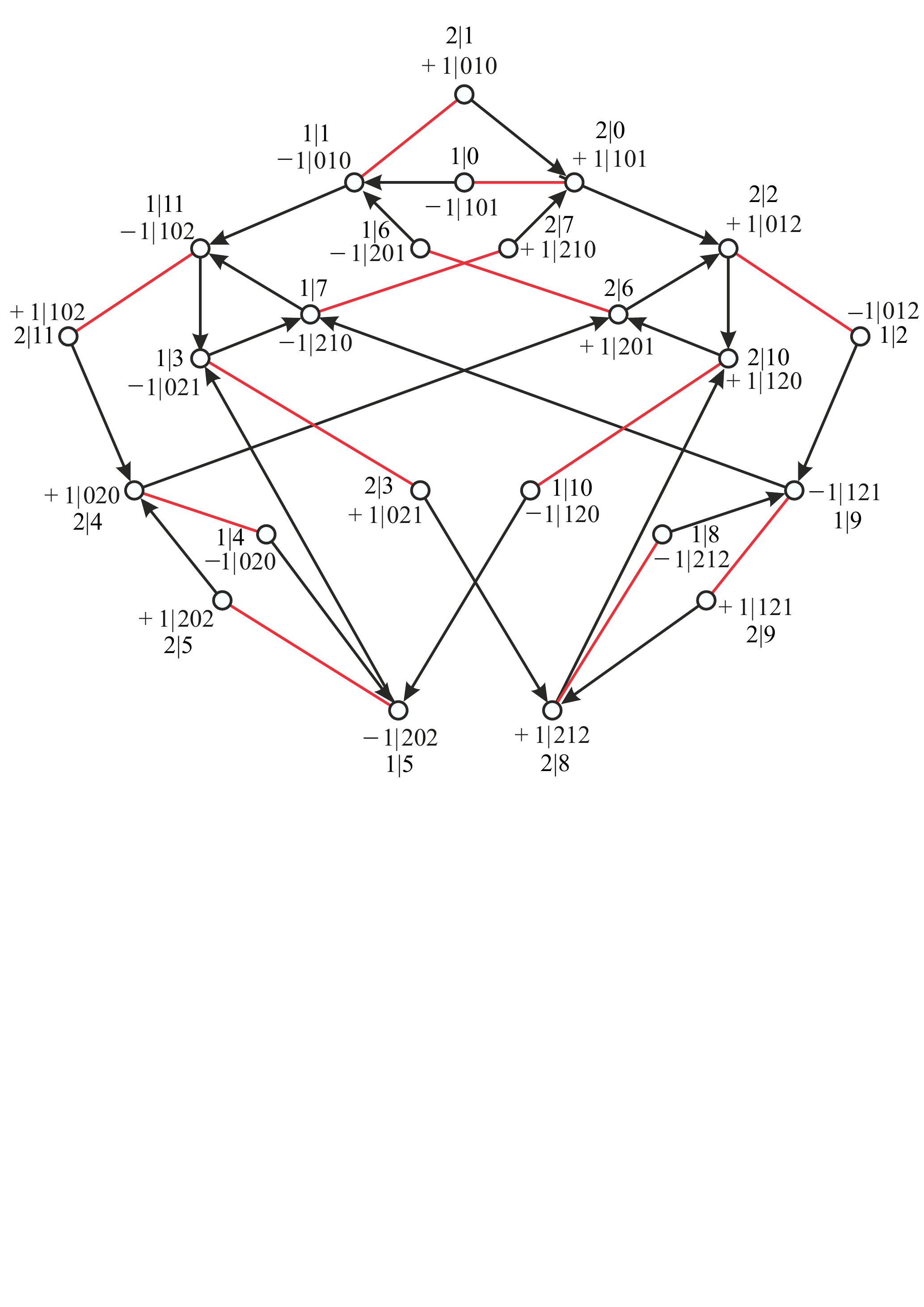}
    \end{center}
    \vskip-6.25cm
	\caption{The mixed graph $F(3)$.}
	\label{fig:F(3)}
\end{figure}

The mixed graph $F(3)$ is shown in Figure \ref{fig:F(3)}.
It is easily checked that the mapping
\begin{equation}
a|x_1x_2\ldots x_n \quad \mapsto\quad -a|\overline{x_1}\,\overline{x_2}\ldots \overline{x_n},
\label{isomorfF(n)}
\end{equation}
where $\overline{0}=1$, $\overline{1}=0$, and $\overline{2}=2$, is an automorphism of $F(n)$. This is because $\overline{x_n+a}=\overline{x_n}-a$.

\begin{proposition}
\label{diam-F(n)}
The diameter of the mixed graph $F(n)$ is $k=2n$.
\end{proposition}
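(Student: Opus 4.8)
The plan is to establish the two inequalities $k \le 2n$ and $k \ge 2n$ separately, exploiting the product-like structure of the labels $a:x_1\ldots x_n$, where the arc rule (ii) shifts the string one position to the left and appends a new symbol determined by the last symbol and the sign $a$. The key observation driving the upper bound is that following $n$ consecutive arcs from a vertex $a:x_1\ldots x_n$ completely flushes the original string: after $j$ arcs we reach $a:x_{j+1}\ldots x_n y_1\ldots y_j$, where the appended tail $y_1\ldots y_j$ is forced by the recurrence $y_{i+1}=y_i+a$ (indices in $\mathbb{Z}_3$, with $y_1 = x_n+a$), hence after exactly $n$ arcs we reach a vertex of the form $a:z_1\ldots z_n$ whose string is an arithmetic progression with common difference $a$ — and there are only a bounded number (in fact, for each sign, three) of such ``reset'' vertices. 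So first I would show: from any vertex we can reach, in at most $n$ arcs, one of the few arithmetic-progression vertices; then, using at most one edge (rule (i)) to switch sign if needed, and then at most another $n$ arcs, we can steer the appended tail to be any prescribed admissible string of length $n$. Combining, any target $b:w_1\ldots w_n$ is reachable in at most $n + 1 + n$ steps; a more careful bookkeeping (the sign-switching edge can be absorbed because after the reset the sign is still free to be chosen by one edge before the second batch of arcs, and one of those $2n+1$ steps is saved since the progression vertex already shares structure with many targets) brings this down to $2n$. I expect the honest count to require showing that the edge used for the sign change does not cost an extra step beyond the $2n$ budget, i.e. that it can be scheduled at a moment where an arc would otherwise have been ``wasted'' re-confirming an already-correct symbol.

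For the lower bound $k \ge 2n$, I would exhibit an explicit ordered pair of vertices at distance exactly $2n$. A natural candidate, suggested by the automorphism \eqref{isomorfF(n)} and by the analogous extremal pair in the paper's informal description of $E(n)$, is a pair like $u = +1:x_1\ldots x_n$ and $v = -1:y_1\ldots y_n$ chosen so that $v$ is ``antipodal'': one shows that the only way to change the sign is an edge (rule (i)), which preserves the whole string, and the only way to change the string is to shift-and-append via arcs; since each arc replaces exactly one old symbol with one new forced symbol, reaching a string that differs from $x_1\ldots x_n$ in a suitably incompatible way while also having the opposite sign requires at least one sign-changing edge plus enough arcs to completely overwrite the string, and if the target string is chosen so that no intermediate shifted-and-appended string is ever a proper suffix-extension matching $v$'s string early, the shortest such walk uses $n$ arcs to clear the string, cannot profit from the edge being inserted early (because the append rule depends on $a$, so switching sign mid-flush changes which symbols get appended), and effectively needs $n$ more arcs, totalling $2n$. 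Concretely I would pick $v$'s string to be the arithmetic progression with difference $-1$ starting where a difference-$+1$ flush of $u$ would \emph{not} land, forcing the walk to first flush under $a=+1$, switch sign, then re-flush under $a=-1$.

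The routine parts — checking that rule (ii) is well-defined (i.e. $x_n + a \ne x_n$ in $\mathbb{Z}_3$, so adjacent symbols stay distinct), that the described reset vertices are correct, and that the greedy reaching procedure terminates within the stated number of steps — are straightforward induction on $n$ and direct computation. The main obstacle, as flagged above, is the tight accounting in the upper bound: proving that one genuinely needs only $2n$ steps and not $2n+1$, which hinges on arguing that the sign-switching edge can always be overlapped with the arc phase without extending the total length — plausibly by treating separately the cases where the target sign equals or differs from the source sign, and in the latter case showing the edge can be performed \emph{after} the first $n$-arc flush at the progression vertex, so that the subsequent $n$ arcs (now under the new sign) reach the target, while in the former case the first flush already had the right sign and at most $n$ further arcs (with possibly one arc ``spent'' harmlessly) suffice. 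I would also double-check consistency with Proposition-independent data: $F(3)$ should then have diameter $6$, which can be verified against Figure \ref{fig:F(3)}.
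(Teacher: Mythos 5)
Your upper-bound scheme has a genuine flaw at its core. You propose to spend $n$ arcs ``flushing'' the source string down to one of the six arithmetic-progression reset vertices, then (after at most one sign-switching edge) to use ``at most another $n$ arcs'' to steer the appended tail onto an arbitrary prescribed admissible string. But the arc rule $(ii)$ is deterministic: from a vertex with sign $a$, the appended symbol is forced to be $x_n+a$, so a block of consecutive arcs can only ever produce an arithmetic progression with common difference $a$. There is no ``steering'' within an arc phase; the only way to change the increment is to interleave edges, and each such edge costs an extra step. An arbitrary admissible target $b\colon w_1\ldots w_n$ has increments $w_{i+1}-w_i\in\{+1,-1\}$ that may alternate, so your second phase would in the worst case need up to $n-1$ interleaved edges on top of the $n$ arcs, blowing the total to roughly $3n$ rather than $2n+1$. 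Moreover, the initial flush buys you nothing: after reaching a reset vertex you face exactly the same per-digit problem you started with. So the obstacle is not the ``tight accounting'' of one step that you flag; the claimed $n+1+n$ budget is not attainable by this route at all.

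The paper's proof works digit by digit from the start, with no flush phase: each target digit $y_i$ is placed in at most two steps (an optional edge to set the correct increment, then an arc), giving $2n$ steps for the string plus possibly one final edge to fix the sign, i.e.\ $2n+1$; the whole content of the proof is then a four-case analysis of $y_1,y_2$ relative to $x_n$ (using the automorphism \eqref{isomorfF(n)} to assume $a=+1$) showing that the first \emph{two} digits can always be placed within three steps, which saves the one step needed to land on $3+2(n-2)+1=2n$. If you want to salvage your write-up, replace the flush-and-steer phases by this direct per-digit construction and supply that case analysis. Your lower-bound plan (exhibiting an explicit antipodal-type pair) is in the same spirit as the paper, which simply names the pairs $+1|0101\ldots 0 \to -1|2020\ldots 2$ ($n$ odd) and $+1|1010\ldots 0 \to +1|2020\ldots 0$ ($n$ even), but as written your argument for why no shorter walk exists is only a heuristic and would still need to be made precise.
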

\begin{proof}
Let us see that there is a path of length at most $2n$ from vertex $\x=a|x_1\ldots x_n$ to vertex $\y=b|y_1\ldots y_n$. Taking into account the automorphism in \eqref{isomorfF(n)}, we can assume that $a=+1$.
Notice that, with at most two steps, depending on the values of $a$, $x_n$, and $y_1$ (at the beginning) or $a$, $y_i$, and $y_{i+1}$ (in the sequel), we can add a new digit of $\y$. Thus, in principle, we would need at most $2n$ steps but, possibly, one last step to fix the first digit to the one of $\y$ (for example, $b$). However, in what follows, we show that the first two digits $y_1$ and $y_2$ can be `placed', so reaching a vertex of the form $a'|\ldots y_1y_2$, with at most 3 steps.
\begin{itemize}
\item
If $y_1=x_n$, the first step is not necessary.
\item 
If $y_1=x_n+1$, go through the arc $+1|x_1\ldots x_n\rightarrow +1|x_2\ldots x_ny_1$.
\item
If $y_1=x_n-1$ and $y_2=y_1-1$, go through the edge and two arcs
$$
+1|x_1\ldots x_n\sim -1|x_1x_2\ldots x_n\rightarrow -1|x_2\ldots x_ny_1
\rightarrow -1|x_3\ldots x_ny_1y_2.
$$
\item
If $y_1=x_n-1(=x_n+2)$ and $y_2=y_1+1$, go through the three arcs
\begin{align*}
+1|x_1\ldots x_n &\rightarrow +1|x_2\ldots x_nx_n+1\rightarrow 
+1|x_3\ldots x_n+1x_n+2\\
 &=+1|x_3\ldots x_n+1y_1
 \rightarrow +1|x_4\ldots x_ny_1y_2.
\end{align*}
\end{itemize}
Thus, to reach $\y$, we need at most $3+2(n-2)+1=2n$ steps.
Finally, it is not difficult to find vertices that are at distance $2n$.
For instance, for $n$ odd, go from $\x=+1|0101\ldots 0$ to
$\y=-1|2020\ldots 2$; and, for $n$ even, go from $\x=+1|1010\ldots 0$ to
$\y=+1|2020\ldots 0$,
\end{proof}

\subsubsection{A numeric construction}

An alternative presentation $F[n]$ of $F(n)$ is as follows:
Given $n\ge 1$, let $N'=3\cdot2^{n-1}$ so that the number of vertices of $F[n]$ is $2N'$.
The vertices of $F[n]$ are labeled as $\alpha|i$, where $\alpha\in\{1,2\}$, and $i\in \mathbb{Z}_{N'}$.
Let $\overline{1}=2$ and $\overline{2}=1$. Then, the adjacencies of $F[n]$ defining the same mixed graph as those in $(i)$ and $(ii)$ are:
\begin{align}
\alpha|i\,& \sim\, \overline{\alpha}|i \mbox{\  (edges);}\\
\alpha|i\,&\rightarrow\, \alpha|-2i+\alpha \mbox{\  (arcs).}
\end{align}
To show that both constructions give the same mixed graph, $F[n]\cong F(n)$,
define first the mapping $\pi$ from the two digits $x_1x_2$  to $\mathbb{Z}_6$ as follows
$$
\pi(01)=0,\ \pi(10)=1,\ \pi(12)=2,\ \pi(21)=3,\ \pi(20)=4, \pi(02)=5.
$$
Then, it is easy to check that, for $n=2$, the mapping $\psi$ from the vertices of $F(2)$ to the vertices of $F[2]$ defined as
$$
\psi(a|x_1x_2)=\alpha(a)|\pi(x_1x_2),
$$
where $\alpha(a)=\frac{a+3}{2}$, is an isomorphism from $F(2)$ to $F[2]$.
(Note that $\alpha(-1)=1$ and $\alpha(+1)=2$).
From this, we can use induction. First, let us assume that $\psi'$ is an isomorphism from $F(n-1)$ to $F[n-1]$ of the form
$$
\psi'(a|x_1x_2\ldots x_{n-1})=\alpha(a)|\pi'(x_1x_2\ldots x_{n-1}),
$$
where the linear mapping $\alpha$ is defined as above, and $\pi'$ is a mapping from the sequences $x_1x_2\ldots x_{n-1}$ to the elements of $\mathbb{Z}_{N'}$, with $N'=3\cdot 2^{n-1}$.
Then, we claim that the mapping $\psi$ from the vertices of $F(n)$ to the vertices of $F[n]$ defined as
\begin{equation}
\label{psi-F(n)}
\psi(a|x_1x_2\ldots x_n)=\alpha(a)|-2\cdot\pi'(x_1x_2\ldots x_{n-1})+\alpha(x_n-x_{n-1}) \ (\textrm{mod}\ N),
\end{equation}
where $N=3\cdot 2^n$, is an isomorphism from $F(n)$ to $F[n]$.
Indeed, since $\psi'$ is an isomorphism from $F(n-1)$ to $F[n]$, we have that $\psi'\Gamma=\Gamma\psi'$ and $\psi'\Gamma^+=\Gamma^+\psi'$, where $\Gamma$ and $\Gamma^+$
denote undirected and directed adjacency, respectively. Thus, from
\begin{align*}
 \psi'\Gamma(a|x1\ldots x_{n-1})&=\psi'(-a|x_1\ldots x_{n-1})=\alpha(-a)|\pi'(x_1\ldots x_{n-1}),\\
 \Gamma\psi'(a|x_1\ldots x_{n-1})&=\Gamma(\alpha(a)|\pi'(x_1\ldots x_{n-1})=\overline{\alpha(a)}|\pi'(x_1\ldots x_{n-1}),
\end{align*}
and
\begin{align*}
 \psi'\Gamma^+(a|x_1\ldots x_{n-1})&=\psi'(a|x_2\ldots x_{n-1}x_{n-1}+a)
  =\alpha(a)|\pi'(x_2\ldots x_{n-1}x_{n-1}+a),\\
 \Gamma^+\psi'(a|x_1\ldots x_{n-1})&=\Gamma^+(\alpha(a)|\pi'(x_1\ldots x_{n-1}))
 =\alpha(a)|-2\cdot\pi'(x_1\ldots x_{n-1})+\alpha(a), 
\end{align*}
we conclude that $\alpha(-a)=\overline{\alpha(a)}$ for every $a\in\{+1,-1\}$ (as it is immediate to check), and
\begin{equation}
\label{eq-iso-F(n-1)}
\pi'(x_2\ldots x_{n-1}(x_{n-1}+a))=-2\cdot\pi'(x_1\ldots x_{n-1})+\alpha(a).
\end{equation}
Now, we can assume that $a=+1$ (because of the automorphism \eqref{isomorfF(n)}), and let $a'=x_n-x_{n-1}$. Then, since clearly $\psi\Gamma=\Gamma\psi$, edges map to edges, we focus on proving that the same holds for the arcs, that is, $\psi^+\Gamma=\Gamma\psi^+$. With this aim, we need to prove that the following two calculations, where we use \eqref{psi-F(n)}, give the same result:
\begin{align}
 \psi\Gamma^+(+1|x_1\ldots x_{n})&=\psi(+1|x_2\ldots x_{n}x_{n}+1)\nonumber\\
  &=2|-2\cdot\pi'(x_2\ldots x_{n})+2,\\
 \Gamma^+\psi(+1|x_1\ldots x_{n})&=\Gamma^+(-2\cdot\pi'(x_1\ldots x_{n-1})+\alpha(a')\nonumber\\
 &=2|4\cdot\pi'(x_1\ldots x_{n-1})-2\alpha(a')+2.
\end{align}
The required equality follows since, from \eqref{eq-iso-F(n-1)} with $a'$ instead of $a$, we have
\begin{align*}
-2\cdot\pi'(x_2\ldots x_{n})&=2\cdot\pi'(x_2\ldots x_{n-1}(x_{n-1}+a'))=-2[-2\cdot\pi'(x_1\ldots x_{n-1})+\alpha(a')]\\
 &=4\cdot\pi'(x_1\ldots x_{n-1})-2\alpha(a').
\end{align*}
In Figure \ref{fig:F(3)}, every vertex has been labeled according to both presentations.\\
Using this presentation, we extend (and again prove) Proposition \ref{diam-F(n)}.
 \begin{proposition}
The diameter of $F(n)$ is $k=2n$. More precisely, there is a path of length $n$ or $n-1$ between any pair of  edges $\alpha|i-\overline{\alpha}|i$ and $\alpha'|i'-\overline{\alpha'}|i'$. Moreover, there is a path of length between $n-1$ and $2n$ between any pair of vertices.
\end{proposition}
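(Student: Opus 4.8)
The plan is to work entirely in the numeric presentation $F[n]$, where the arcs act on the edge index $i\in\mathbb Z_{N'}$ (with $N'=3\cdot 2^{n-1}$) by the affine maps $i\mapsto -2i+\alpha$, $\alpha\in\{1,2\}$, while edges merely switch the label $\alpha\leftrightarrow\overline\alpha$ without moving $i$. The central object is the set of edge‑indices reachable from a fixed edge using a prescribed number $m$ of arc‑steps: if the $m$ arcs are taken with labels $c_1,\dots,c_m\in\{1,2\}$ (inserting one edge‑step between two consecutive arcs exactly when the label has to change), then the resulting edge index is $(-2)^m i+\sum_{t=1}^m(-2)^{m-t}c_t$. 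First I would show, by an easy induction on $m$ using the recursion $S_m=\{-2s+c:\ s\in S_{m-1},\ c\in\{1,2\}\}$ (which is carry‑free in base $-2$), that as $(c_1,\dots,c_m)$ ranges over $\{1,2\}^m$ the offset $\sum_t(-2)^{m-t}c_t$ ranges over an interval $S_m$ of $2^m$ consecutive integers, explicitly $\{1,\dots,2^m\}$ for $m$ odd and $\{1-2^m,\dots,0\}$ for $m$ even. Hence from a given edge the set of edges reachable with exactly $m$ arcs is the translate $(-2)^m i+S_m\pmod{N'}$.

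Next I would exploit the arithmetic coincidence $(-2)^n-(-2)^{n-1}=-3(-2)^{n-1}=\mp N'$, so that $(-2)^n\equiv(-2)^{n-1}\pmod{N'}$. Consequently the sets of edges reachable with exactly $n$ arcs and with exactly $n-1$ arcs are translates by the \emph{same} quantity $(-2)^n i$ of $S_n$ and of $S_{n-1}$; since $S_n$ and $S_{n-1}$ are intervals of lengths $2^n$ and $2^{n-1}$ lying on opposite sides of $0$ (one is $\le 0$, the other $\ge 1$), their union is an interval of length $2^n+2^{n-1}=3\cdot2^{n-1}=N'$, i.e.\ all of $\mathbb Z_{N'}$. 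This establishes that between any two edges of $F(n)$ there is a path using exactly $n$ or exactly $n-1$ arcs, which is the ``more precise'' assertion. For the lower bound I would note that the edges reachable with at most $n-2$ arcs number at most $\sum_{m=0}^{n-2}2^m=2^{n-1}-1<N'$, so some pair of edges genuinely needs $\ge n-1$ arcs, and in particular some pair of vertices is at distance $\ge n-1$.

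To pass from edges to vertices and recover the diameter, for a target vertex $\alpha'\,|\,i'$ I would take an $m$‑arc realisation ($m\in\{n-1,n\}$) of the edge $\{1\,|\,i',\,2\,|\,i'\}$, choose the starting endpoint of the source edge so that no initial edge‑step is wasted, insert the at most $m-1$ edge‑steps forced by label changes among $c_1,\dots,c_m$, and, if the final label $c_m$ is not $\alpha'$, either finish with one edge‑step or, when that would overshoot, replace the last stretch of arcs by a slightly longer run of arcs of constant label reaching $i'$ at label $\alpha'$ directly. A short case analysis of this bookkeeping yields a path of length between $n-1$ and $2n$; combined with Proposition \ref{diam-F(n)} (or with the explicit extremal pairs $+1|0101\cdots0\to-1|2020\cdots2$ for $n$ odd and $+1|1010\cdots0\to+1|2020\cdots0$ for $n$ even, re‑examined in the $F[n]$ coordinates) this gives $\diam F(n)=2n$.

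The routine inductive identification of $S_m$ with an interval is not the hard part; the delicate point is the last step, squeezing the vertex‑to‑vertex bound down to exactly $2n$ rather than $2n+1$. The naive count — a possible initial edge‑step, up to $n-1$ internal edge‑steps forced by an alternating label pattern, and a final edge‑step — can add up to $2n+1$, so one must check that this worst case does not actually occur: for instance, when the unique $n$‑arc label sequence to the target alternates, the target is reachable by a longer all‑same‑label run of arcs needing no edge‑steps at all, or else the alternating targets cannot simultaneously force both an initial and a final edge‑step. I would isolate this as a small lemma and verify it by the kind of explicit computation one already does for $F(2)$.
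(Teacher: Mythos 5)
Your interval computation is correct and is in fact the same covering argument the paper uses: $S_m$ is an interval of $2^m$ consecutive residues, $(-2)^n\equiv(-2)^{n-1}\pmod{N'}$, so the $n$-arc and $(n-1)$-arc translates tile $\mathbb{Z}_{N'}$ and every edge is reached from a given edge with exactly $n-1$ or $n$ arcs. The genuine gap is exactly the point you defer to a ``small lemma'': the worst case does occur, and neither of your two proposed escapes survives inspection. Fix the source $2|i$ (allowed by the automorphism) and write $f(\alpha|i)=\alpha|-2i+\alpha$, $g(\alpha|i)=\overline{\alpha}|i$. The target whose unique $n$-arc label sequence is the fully alternating one starting with label $1$, taken with vertex label equal to the complement of the last arc label, is a perfectly good vertex (it is $g(fg)^n(2|i)$), and it simultaneously forces an initial edge step, $n-1$ internal edge steps, and a final edge step; so your second alternative (``the alternating targets cannot force both an initial and a final edge-step'') is false. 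Your first alternative fails as well: a run of $m$ arcs of constant label $c$ from $2|i$ lands on index $(-2)^m i + c\,\frac{1-(-2)^m}{3}$, and these offsets miss the exceptional target. For instance, in $F(3)$ the exceptional vertex is $2|4i+1$, whose offset is odd, while constant label-$2$ runs produce only even offsets (and prefixing one edge step and running label-$1$ arcs does not produce offset $1$ within the step budget either, besides ending on the wrong label). So, as written, your argument only yields diameter at most $2n+1$.

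What closes the case — and what the paper's proof supplies explicitly — is a path using $n+1$ arcs instead of $n$: since $(-2)^{n+1}\equiv(-2)^{n-1}\pmod{N'}$ too, the exceptional index is also realized by an $(n+1)$-arc label sequence that is constant on its first two letters and alternates afterwards. That sequence needs no initial edge step, only $n-1$ internal edge steps, and ends on the correct vertex label, for a total of $(n+1)+(n-1)=2n$ steps; in the paper's notation, $(gf)^{n-1}f^2(2|i)$ reaches the same vertex as $g(fg)^n(2|i)$. You would need this (or an equivalent explicit $2n$-step detour), proved for general $n$ rather than checked on small cases, to turn your sketch into a proof. The remaining ingredients of your proposal — the inductive identification of $S_m$, the counting lower bound, and the appeal to Proposition~\ref{diam-F(n)} (or the explicit extremal pairs) for the tightness of $2n$ — are sound and coincide with the paper's route.
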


\begin{proof}
Let us consider a tree rooted at a pair of vertices of an edge, $\vecu_1=1|i$ and $\vecu_2=2|i$, and suppose the $n=2r+1$ is odd (the case of even $n$ is similar). Then,
\begin{itemize}
    \item 
    The vertices at distances $1,2$ of $\vecu_1$ or $\vecu_2$ are 
    $\alpha|-2i+1$, $\alpha|-2i+2$ with $\alpha=1,2$.
   \item 
    The vertices at distances $3,4$ of $\vecu_1$ or $\vecu_2$ are 
    $\alpha|4i$, $\alpha|4i-1$, $\alpha|4i-2$ and $\alpha|4i-3$ with $\alpha=1,2$. 
    \item 
    The vertices at distances $5,6$ of $\vecu_1$ or $\vecu_2$ are 
    $\alpha|-8i+1$, $\alpha|-8i+2$, \ldots, $\alpha|-8i+8$  with $\alpha=1,2$.
    \item[]\qquad \vdots
    \item 
    The vertices at distances $2n-3,2n-2$ of $\vecu_1$ or $\vecu_2$ are 
    $\alpha|2^{n-1}+r$ with $r=0,-1,\ldots,-2^{n-1}+1$ and $\alpha=1,2$.
     \item 
    The vertices at distances $2n-1,2n$ of $\vecu_1$ or $\vecu_2$ are
   $\alpha|-2^n+r$ with $r=1,2,\ldots,2^{n}$ and $\alpha=1,2$.
\end{itemize}
See Figure \ref{fig:paths} for the case of $F(3)$, which has $24$ vertices. Note that, from the pair of vertices $1|i$ and $2|i$, the $3$-rd and $4$-th columns contain all the `consecutive' vertices of $F(3)$ from $\alpha|4i-3$ to $a|4i+8$, with $\alpha=1,2$. More precisely, from vertex $2|i$ (we can fix $\alpha$ because of the automorphism), we reach all of such vertices with at most $6$ steps, except $2|4i+1$ (in boldface, on the top of the $4$-th column), which would require the $7$ adjacencies 
`$-\rightarrow-\rightarrow-\rightarrow-$'. But this vertex is reached following the path `$\rightarrow\rightarrow\rightarrow-\rightarrow-$' (in boldface, in the  $5$-th column). In general, using the notation $f(\alpha|i)=\alpha|-2i+\alpha$
and $g(\alpha|i)=\overline{\alpha}|i$, we have the following: Let $N'=3\cdot2^{n-1}$. Then,
\begin{itemize}
\item
If $n$ is even, then the exception vertex is $g(fg)^n(2|i) (\textrm{mod}\ N) =1|2^n i$ ($2n+1$ steps) but $(gf)^{n-1}f^2(2|i) (\textrm{mod}\ N) =1|2^n i$ ($2n$ steps).
\item
If $n$ is odd, then the exception vertex is $g(fg)^n(2|i)(\textrm{mod}\ N) =2|2^{n-1} i+1$ ($2n+1$ steps) but $(gf)^{n-1}f^2(2|i) (\textrm{mod}\ N) =2|2^{n-1} i+1$ ($2n$ steps).
\end{itemize}
\end{proof}

\begin{figure}[t]
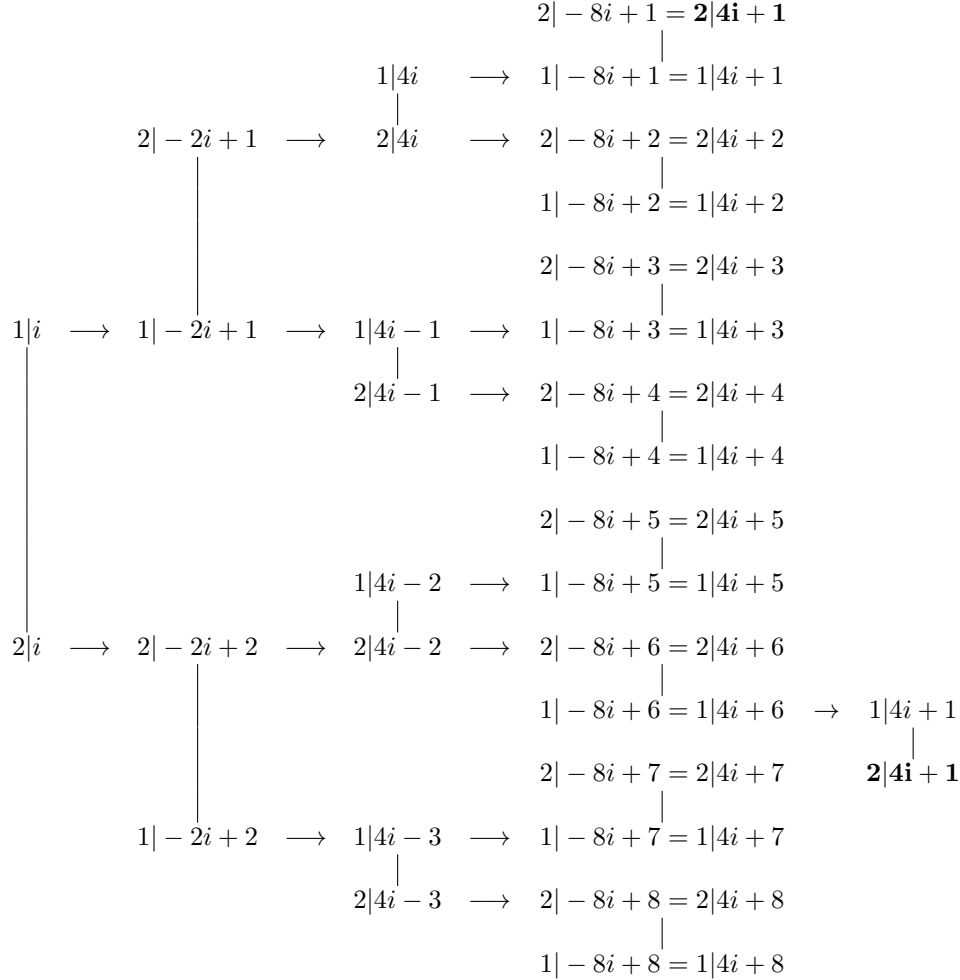

\begin{footnotesize}
\begin{center}
    $$
\begin{array}{ccccccccc}
    &             &         &              &        &             &  2|-8i+1={\bf 2|4i+1} & & \\
    &             &         &              &        &             &  \vline  & & \\  
    &             &         &              & 1|4i   & \longrightarrow &  1|-8i+1=1|4i+1  & &\\
    &             &         &              & \vline &             &          & & \\  
    &             & 2|-2i+1 & \longrightarrow  & 2|4i   & \longrightarrow &  2|-8i+2=2|4i+2  & &\\
    &             & \vline  &              &        &             &  \vline  & & \\ 
    &             & \vline  &              &        &             &  1|-8i+2=1|4i+2  & &\\
    &             & \vline  &              &        &             &          & & \\
    &             & \vline  &              &        &             &  2|-8i+3=2|4i+3  & &\\
    &             & \vline  &              &        &             &  \vline  & & \\ 
1|i & \longrightarrow & 1|-2i+1 & \longrightarrow  & 1|4i-1 & \longrightarrow &  1|-8i+3=1|4i+3  & &\\
\vline    &             &         &              & \vline &             &         & & \\
\vline    &             &         &              & 2|4i-1 & \longrightarrow &  2|-8i+4=2|4i+4  & & \\
\vline    &             &         &              &        &             &  \vline  & & \\ 
\vline    &             &         &              &        &             &  1|-8i+4=1|4i+4  & & \\
\vline    &             &         &              &        &             &          & & \\ 
\vline    &             &         &              &        &             &  2|-8i+5=2|4i+5  & & \\
\vline    &             &         &              &        &             &  \vline  & & \\ 
\vline   &             &         &              & 1|4i-2 & \longrightarrow &  1|-8i+5=1|4i+5  & & \\
\vline    &             &         &              & \vline &             &          & & \\
2|i & \longrightarrow & 2|-2i+2 & \longrightarrow  & 2|4i-2 & \longrightarrow &  2|-8i+6=2|4i+6  & & \\
    &             & \vline  &              &        &             &  \vline   & &\\ 
    &             & \vline  &              &        &             &  1|-8i+6=1|4i+6  & \rightarrow & 1|4i+1\\
    &             & \vline  &              &        &             &          & & \vline\\
    &             & \vline  &              &        &             &  2|-8i+7=2|4i+7  & & {\bf 2|4i+1}\\
    &             & \vline  &              &        &             &  \vline   & &\\ 
    &             & 1|-2i+2 & \longrightarrow  & 1|4i-3 & \longrightarrow &  1|-8i+7=1|4i+7  & & \\
    &             &         &              & \vline &             &          & & \\
    &             &         &              & 2|4i-3 & \longrightarrow &  2|-8i+8=2|4i+8  & & \\
    &             &         &              &        &             &  \vline  & & \\ 
    &             &         &              &        &             &  1|-8i+8=1|4i+8  & &
\end{array}
$$
\end{center}
	\caption{The paths of length at most $6$ in $F(3)$ from the vertices of the edge $\{1|i,2|i\}$.}
	\label{fig:paths}
 \end{footnotesize}
\end{figure}

\subsection{The mixed graphs $F^*(n)$}

A variation of the mixed graphs $F(n)$ allows us to obtain $(1,1,k)$-regular mixed graphs that we denote $F^*(n)$.
Given $n\ge 2$, the $(1,1,k)$-regular mixed graph $F^*(n)$ has vertices labeled as those of $F(n)$. That is, 
$a|x_1\ldots x_n$, where $a\in\{+1,-1\}$ and $x_i\in \mathbb{Z}_3$. Now the adjacencies are as follows:
\begin{align}
a|x_1x_2\ldots x_n\,& \sim\, -a|x_1x_2\ldots x_n \mbox{\ (edges);}\label{edges-1}\\
a|x_1x_2\ldots x_n\,&\rightarrow\, a|x_2x_3\ldots x_n(x_n+a(x_2-x_1)) \mbox{\ (arcs)}\label{arcs-1},
\end{align}
where, when computed modulo 3,  we take $x_2-x_1\in \{+1,-1\}$. Hence, the vertices $a|x_1x_2\ldots x_n$ and $a|x'_1x_2\ldots x_n$, with $x_1'\neq x_1$, are  adjacent to different vertices of the form $a|x_2\ldots (x_n\pm 1)$.

For example, the mixed graph $F^*(3)$ is shown in Figure \ref{fig:Fstar(3)}.

 \begin{figure}[t]
    \begin{center}
        \includegraphics[width=8cm]{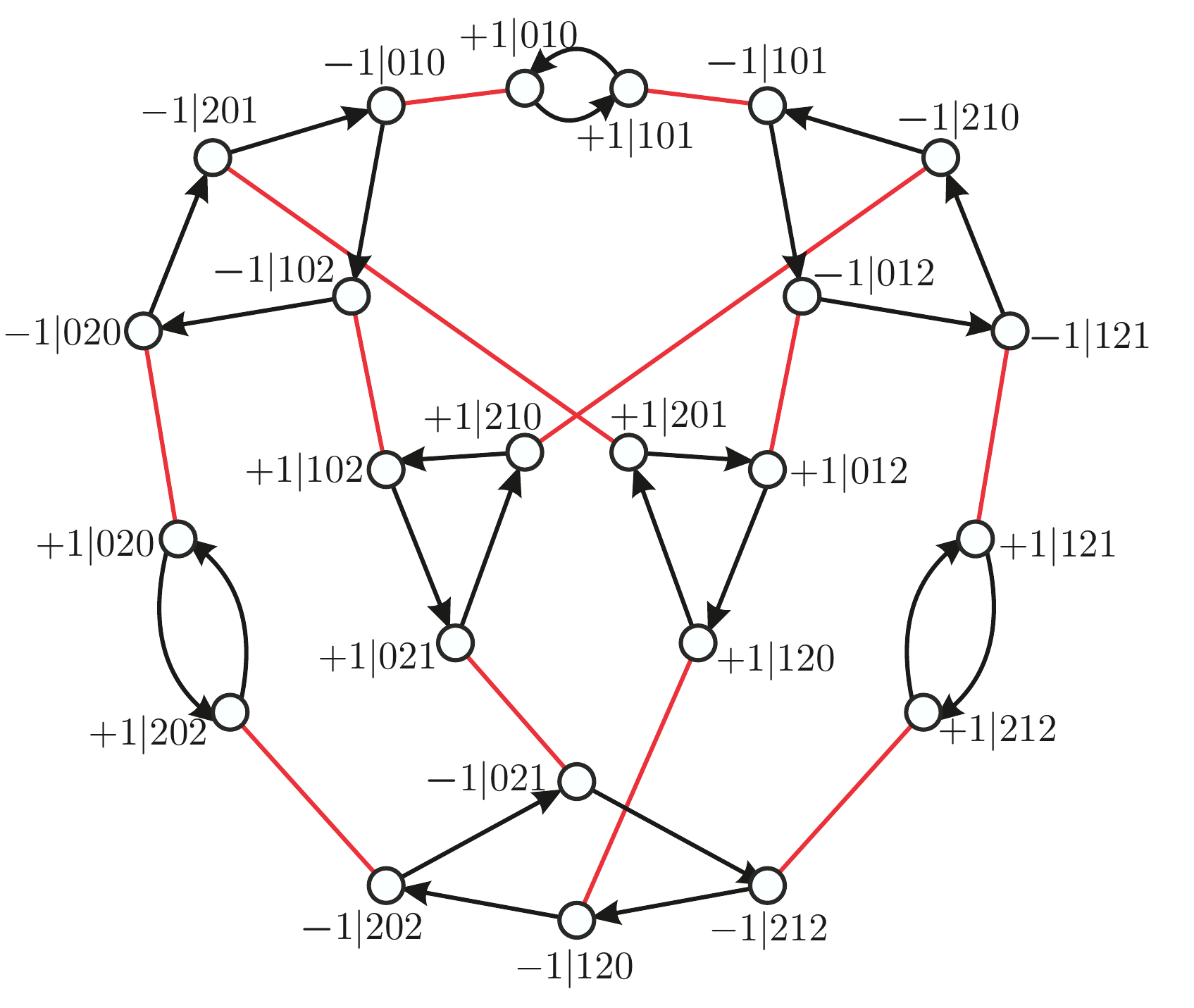}
    \end{center}
    \vskip-.5cm
	\caption{The mixed graph $F^*(3)$.}
	\label{fig:Fstar(3)}
\end{figure}

\subsubsection{An alternative presentation}

To study some properties of $F^*(n)$, it is useful to work with the following equivalent presentation:
The vertices are now labeled as $a|b:a_1\ldots a_{n-1}$, where $a,a_i\in\{+1,-1\}$ for $i=1,\ldots,n-1$, and $b\in \mathbb{Z}_3$. Then, the adjacencies \eqref{edges-1} and \eqref{arcs-1} become 
\begin{align}
a|b:a_1a_2\ldots a_{n-1}\,& \sim\, -a|b:a_1a_2\ldots a_{n-1} \mbox{\  (edges);}\\
a|b:a_1a_2\ldots a_{n-1}\,&\rightarrow\, a|b+a_1:a_2a_3\ldots a_{n-1}\,aa_1 \mbox{\  (arcs).}
\end{align}
Notice that a vertex $a|x_1x_2\ldots x_n$ with the old presentation is now labeled as $a|b:a_1\ldots a_{n-1}$ with $b=x_1$ and $a_i=x_{i+1}-x_i$ for $i=1,\ldots,n-1$. From this, it is readily checked that the `new' adjacencies
are as mentioned.
\begin{proposition}
The group of automorphisms of $F^*(n)$ is isomorphic to the dihedral group $D_3$.
\end{proposition}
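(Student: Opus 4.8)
The plan is to exhibit an explicit copy of $D_3$ inside $\Aut(F^*(n))$ and then show there is nothing more by passing to the quotient by the perfect matching of edges, which will turn out to be an iterated line digraph of $K_3$. For the lower bound I would use the presentation with vertex labels $a|b:a_1\ldots a_{n-1}$ (where $a,a_i\in\{+1,-1\}$ and $b\in\mathbb{Z}_3$) and check that the rotations
$$
\phi_c\colon\ a|b:a_1\ldots a_{n-1}\ \longmapsto\ a|b+c:a_1\ldots a_{n-1}\qquad(c\in\mathbb{Z}_3)
$$
and the reflection
$$
\tau\colon\ a|b:a_1\ldots a_{n-1}\ \longmapsto\ a|{-b}:(-a_1)\ldots(-a_{n-1})
$$
are automorphisms: edges are manifestly preserved, and for the arc rule one only needs that the target $a|b+a_1:a_2\ldots a_{n-1}(aa_1)$ is affine in $b$ over $\mathbb{Z}_3$ and is reversed by simultaneously negating $b,a_1,\ldots,a_{n-1}$. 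Since $\phi_1$ has order $3$, $\tau$ has order $2$, $\tau\notin\langle\phi_1\rangle$ (it moves the $a_i$), and $\tau\phi_1\tau=\phi_1^{-1}$, the subgroup $\langle\phi_1,\tau\rangle$ is isomorphic to $D_3$, so $|\Aut(F^*(n))|\ge 6$.

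For the upper bound, note first that $F^*(n)$ is $(1,1)$-regular and strongly connected, so an automorphism is determined by the image of any single vertex (propagate along the unique edge-mate, out-neighbour and in-neighbour of each vertex); hence $\Aut(F^*(n))$ acts \emph{freely} on the vertex set. The edges form a perfect matching, and $p\colon a|b:a_1\ldots a_{n-1}\mapsto -a|b:a_1\ldots a_{n-1}$ is the involution swapping matched vertices; every automorphism preserves the matching, so it commutes with $p$ and descends to an automorphism of the quotient digraph $\bar G=F^*(n)/p$. This yields a homomorphism $\Phi\colon\Aut(F^*(n))\to\Aut(\bar G)$ whose kernel consists of automorphisms $g$ with $g(v)\in\{v,p(v)\}$ for every $v$; by freeness such a $g$ equals $\id$ or equals $p$ as a map, and $p$ is \emph{not} an automorphism of $F^*(n)$ (the arc rule is not reversed by negating $a$ alone, since the last symbol changes from $aa_1$ to $-aa_1\ne aa_1$). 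Hence $\Phi$ is injective, and $|\Aut(F^*(n))|\le|\Aut(\bar G)|$.

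It remains to identify $\bar G$. Encoding the matched pair $\{{+1}|b:a_1\ldots a_{n-1},\,{-1}|b:a_1\ldots a_{n-1}\}$ by the word $b\,a_1\ldots a_{n-1}$, one reads off from \eqref{arcs-1} that its two out-arcs lead to $(b+a_1)\,a_2\ldots a_{n-1}(+1)$ and $(b+a_1)\,a_2\ldots a_{n-1}(-1)$; that is, one deletes the leading symbol $a_1$, appends an arbitrary symbol of $\{+1,-1\}$, and updates the state $b\mapsto b+a_1$. A similar check shows the two in-arcs have this same ``shift'' form, so $\bar G$ is a simple digraph with in- and out-degree $2$ on $3\cdot 2^{n-1}$ vertices. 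This is precisely the line-digraph transition rule, and an easy induction on $n$ gives $\bar G\cong L^{n-1}(K_3)$, the $(n-1)$-fold iterated line digraph of the complete symmetric digraph $K_3$ on three vertices (a Kautz digraph). Since $K_3$ is connected, has minimum in- and out-degree at least $1$, and is not a directed cycle, iterating the classical fact that the line-digraph operation preserves the automorphism group gives $\Aut(\bar G)\cong\Aut(K_3)\cong S_3$, of order $6$. Together with the first paragraph this forces $\Aut(F^*(n))\cong D_3$.

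The checks that $\phi_c,\tau$ are automorphisms, that the action is free, and that $p$ is not an automorphism are routine bookkeeping. The two substantive points are: (a) the identification $\bar G\cong L^{n-1}(K_3)$ — in particular verifying that no two distinct arcs of $F^*(n)$ collapse onto the same arc of $\bar G$, so that $\bar G$ is genuinely the simple Kautz digraph and not a multidigraph; and (b) the appeal to the line-digraph automorphism theorem, which is where I would either cite the literature on the automorphism groups of De Bruijn and Kautz digraphs or insert a short self-contained lemma based on Heuchenne's characterisation of line digraphs. A more hands-on alternative that avoids the quotient would be to analyse directly the cycle structure of the out-arc permutation of $F^*(n)$ in conjunction with the matching, but the Kautz-quotient argument is the cleanest route I see.
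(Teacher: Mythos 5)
Your proof is correct, and while your lower bound coincides with the paper's (your $\phi_1$ and $\tau$ generate exactly the subgroup the paper exhibits; its map $\Phi$ is $\phi_1\tau$ and its $\Psi$ is $\phi_1$), your upper bound takes a genuinely different route. The paper stays inside $F^*(n)$: it observes that the graph has exactly three digons (joining the pairs of the form $a|xyxy\ldots$ and $a|yxyx\ldots$, with the sign $a$ depending on the parity of $n$), so every automorphism permutes these three digons and hence $|\Aut(F^*(n))|\le 3!=6$. You instead quotient by the perfect matching, identify the quotient with the iterated line digraph $L^{n-1}(K_3)$ (a Kautz digraph), show the induced homomorphism on automorphism groups is injective via freeness of the action together with the check that the matching involution $p$ is not itself an automorphism, and then invoke the classical theorem that taking line digraphs preserves the automorphism group, so the quotient has automorphism group $S_3$. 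All of your intermediate verifications are sound (in particular the two out-arcs of a matched pair land on distinct quotient vertices, so the quotient really is the simple Kautz digraph), and the one ingredient you outsource --- Aigner/Hemminger-type preservation of automorphisms under the line-digraph operation, equivalently $\Aut$ of the Kautz digraph being the symmetric group on the alphabet --- is true and applies to $K_3$ (connected, in- and out-degree $2$, loopless, no parallel arcs), though you must cite it or prove the short lemma you mention. What each approach buys: the paper's digon count is shorter and entirely self-contained; your argument is more structural, explaining the number $6$ as $|\Aut(K_3)|$ and tying $F^*(n)$ to the Kautz quotient that recurs elsewhere in the paper. Incidentally, your rigidity observation (an automorphism of a connected $(1,1)$-regular mixed graph is determined by the image of one vertex, so the action is free) is precisely what is needed to make the paper's terse ``permutes the three digons, hence at most $3!$ elements'' airtight, since one must still exclude a nontrivial automorphism stabilizing each digon setwise, and freeness plus composing with a suitable reflection of your $D_3$ does exactly that.
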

\begin{proof}
Using the new notation, let us first show that the following mappings, $\Phi$ and $\Psi$, are automorphisms of $F^*(n)$:
\begin{align}
\Phi(a|b:a_1a_2\ldots a_{n-1})& =a|\phi(b):\overline{a_1}\,\overline{a_2}\ldots \overline{a_{n-1}};\label{Phi}\\
\Psi(a|b:a_1a_2\ldots a_{n-1})& =a|b+1:a_1a_2\ldots a_{n-1},\label{Psi}
\end{align}
where $\phi(0)=1$, $\phi(1)=0$, $\phi(2)=2$, and $\overline{a_i}=-a_i$ for $i=1,\ldots,n-1$. To prove that $\Phi$ is an automorphism of $F^*(n)$, observe that the vertex in \eqref{Phi} is adjacent, through an edge, to
$$
\overline{a}|\phi(b):\overline{a_1}\,\overline{a_2}\ldots \overline{a_{n-1}}
=\Phi(\overline{a}|b:a_1a_2\ldots a_{n-1}),
$$
and, through an arc, to 
$$
a|\phi(b)+\overline{a_1}:\overline{a_2}\ldots \overline{a_{n-1}}\,a\overline{a_1}
=\Phi( a|b+a_1:a_2a_3\ldots a_{n-1}\,aa_1),
$$
where the last equality holds since $\phi(b+a_1)=\phi(b)+\overline{a_1}$, and $a\overline{a_1}=\overline{aa_1}$ for every $b\in\mathbb{Z}_3$ and $a,a_1\in\{+1,-1\}$.
Similarly, we can prove that $\Psi$ is also an automorphism of $F^*(n)$.
Clearly, $\Phi$ is involutive, and $\Psi$ has order three. Moreover, $(\Phi\Psi)^2= \id$ (the identity). Then, the automorphism group $\Aut(F^*(n))$ must contain the subgroup $\langle \Phi,\Psi\rangle=D_3$. It is easy to see that the graph $F^*(n)$  has exactly three digons between pairs of vertices of the form $-1:xyxy\ldots xy$ and $-1:yxyx\ldots yx$  when $n$ is even, or $+1:xyxy\ldots x$ and $+1:yxyx\ldots y$ when $n$ is odd; see again Figure \ref{fig:Fstar(3)}. Thus,
any automorphism of $F^*(n)$ must interchange these digons; hence, the automorphism group has at most $3!=6$ elements. Consequently, $\Aut(F^*(n))\cong D_3\cong S_3$, as claimed.
\end{proof}

Before giving the diameter of $F^*(n)$, we show that, for every vertex $\vecu$, there is only a possible vertex $\vecv$ at distance $2n+1$ from $\vecu$. 
Suppose first that $n$ is even (the case of odd $n$ is similar). It is clear that, excepting possibly one case, from vertex $\vecu=a|b:a_1a_2\ldots a_{n-1}$ to vertex $\vecv=a'|b':y_1y_2\ldots y_{n-1}$, there is a path with at most $2n$ steps of the form $- \rightarrow - \rightarrow \cdots - \rightarrow$, where `$-$' stands for `$\sim$' (edge) or `$\emptyset$' (nothing), and `$\rightarrow$' represents an arc.  The exception occurs when all the edges of the path are necessary. That is:
 \begin{itemize}
 \item 
 If $b'=b+\Sigma +\overline{a}$ (where $\Sigma=\textstyle\sum_{i=1}^{n-1} a_i$, and so that  $b'\neq b+\Sigma$), then the first two steps are
 $$
 a|b:a_1a_2\ldots a_{n-1}\ \sim \ \overline{a}|b:a_1a_2\ldots a_{n-1}\ \rightarrow \ \overline{a}|b+a_1:a_2a_3\ldots a_{n-1}\,\overline{a}a_1.
 $$
 \item 
 If $y_1=aa_2$, then the next two steps are
 \begin{align*}
 \overline{a}|b+a_1:a_2a_3\ldots a_{n-1}\,\overline{a}a_1\ &\sim \ a|b+a_1:a_2a_3\ldots a_{n-1}\,\overline{a}a_1\ \\
  & \rightarrow \  a|b+a_1+a_2:a_3\ldots a_{n-1}\,\overline{a}a_1\, aa_2.
 \end{align*}
 \item 
 If $y_1=\overline{a}a_3$, then the next two steps are
 \begin{align*}
 a|b+a_1+a_2:a_3\ldots a_{n-1}\,\overline{a}a_1\, aa_2\ &\sim \ \overline{a}|b+a_1+a_2:a_3\ldots a_{n-1}\,\overline{a}a_1\, aa_2\ \\
  & \rightarrow \  \overline{a}|b+a_1+a_2+a_3:a_4\ldots a_{n-1}\,\overline{a}a_1\, aa_2\, \overline{a}a_3.
 \end{align*}
 \item[] $\vdots$
 \item 
 If $y_{n-1}=a\overline{a}a_1=-a_1$, then the last two steps are
 \begin{align*}
 \overline{a}|b+\Sigma:\overline{a}a_1\,aa_2\,\overline{a}a_3\ldots \overline{a}a_{n-1}\, &\sim \ a|b+\Sigma:\overline{a}a_1\,aa_2\ldots \overline{a}a_{n-1}\ \\
  & \rightarrow \  a|b+\Sigma+\overline{a}a_1:aa_2\ldots \overline{a}a_{n-1}\,\overline{a_1}\\
  &= a|b':y_1y_2\ldots y_{n-1}.
 \end{align*}
 \end{itemize}
 Thus, if $a\neq a'$ $(a'=\overline{a})$, the vertex 
 $$
 \vecv=\overline{a}|b+\Sigma+\overline{a}a_1:aa_2\,\overline{a}a_3\ldots \overline{a}a_{n-1}\,\overline{a_1}
 $$
 is not reached from $\vecu$ in this way. Similarly, if $n$ is odd,
 the exception is the vertex
 $$
 \vecv=a|b+\Sigma+\overline{a}a_1:aa_2\,\overline{a}a_3\ldots aa_{n-1}\,\overline{a_1}.
 $$

\subsection{The mixed graphs $F'(n)$}

If necessary, the  three digons of $F^*(n)$ can be removed and replaced by three new edges of the form 
\begin{align*}
+1:xyxy\ldots xy &\ \sim\  +1:yxyx\ldots yx\qquad \mbox{($n$ even)},\\
-1:xyxy\ldots yx  &\ \sim\ -1:yxyx\ldots xy\qquad \mbox{($n$ odd)}.
\end{align*}
So, we obtain the new mixed graph $F'(n)$, with  $N=3\cdot 2^n-6$ vertices and diameter $k\le 2n$. More precisely, $F'(2)$ is isomorphic to the Kautz digraph $K(2,2)$ with $N=6$ vertices and diameter $k=2$; and when $n\in\{3,4\}$, the mixed graph $F'(n)$ has diameter $k=2n-1$. For instance, the mixed graph $F'(4)$, with $N=42$ vertices and diameter $k=7$, is shown in Figure 	\ref{fig:F'(4)}.
In all the other cases, when $n\ge 5$, computational results seem to show that the diameter of $F'(n)$ is always $k=2n$.

 \begin{figure}[t]
    \begin{center}
        \includegraphics[width=12cm]{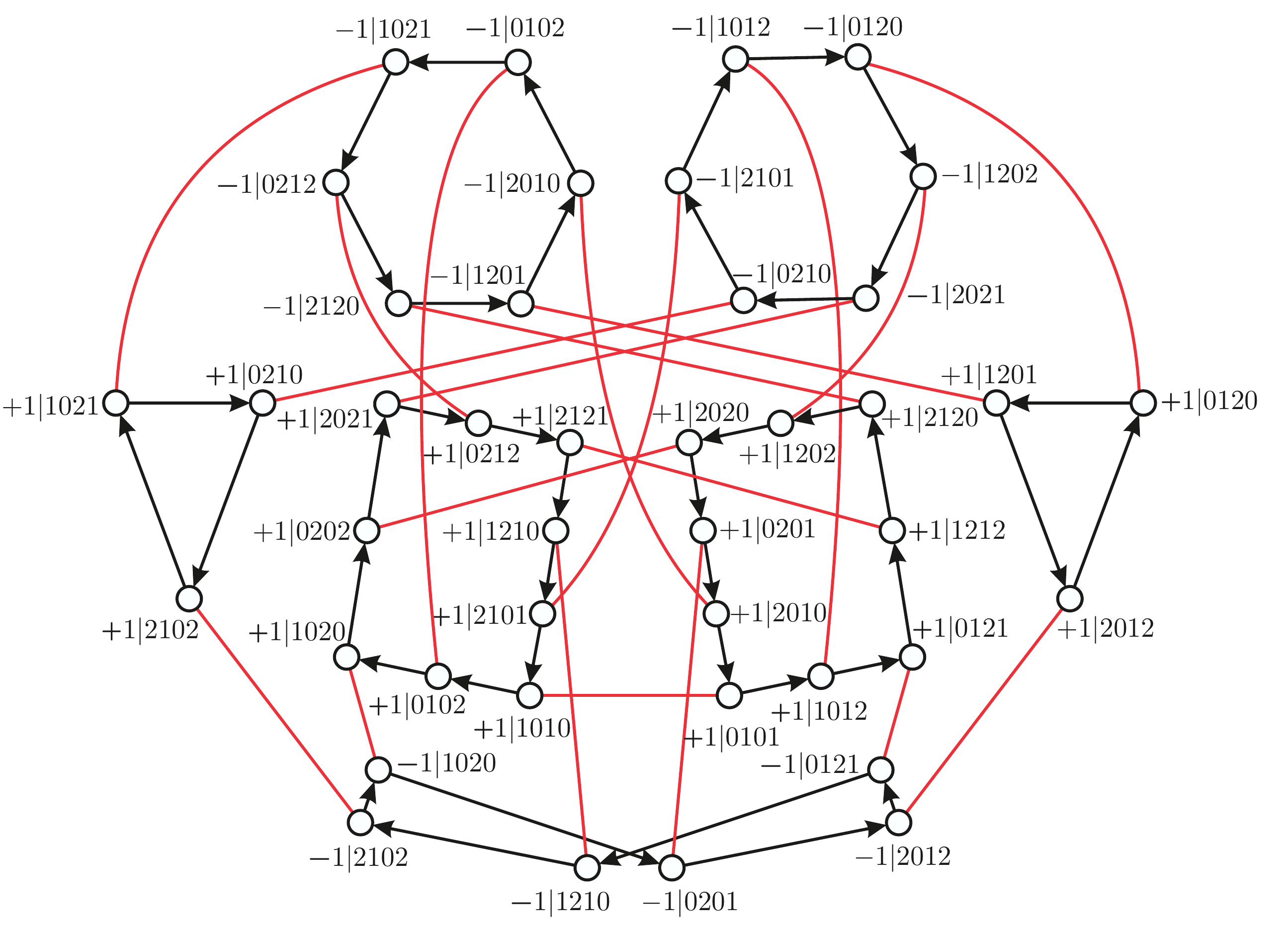}
    \end{center}
    \vskip-.5cm
	\caption{The mixed graph $F'(4)$ with $42$ vertices and diameter $7$.}
	\label{fig:F'(4)}
\end{figure}

\subsection{The mixed graphs $G(n)$}

We define a $(1,1,k)$-regular mixed graph $G(n)$, for $n\ge 2$, as follows: the vertices are of the form $x_0|x_1\ldots x_n$, where $x_i\in\mathbb{Z}_2$ for $i=0,1,\ldots,n$. More precisely, the vertices are:
\begin{itemize}
	\item[$\circ$] For any $n$: $1|00\ldots0$ and  $1|11\ldots1$;
	\item[$\circ$] For odd $n$: $0|0101\ldots0$ and $0|1010\ldots1$;
	\item[$\circ$] For even $n$: $1|0101\ldots01$ and $1|1010\ldots10$;
	\item[$\circ$] For the other vertices, $0|x_1\ldots x_n$ and $1|x_1\ldots x_n$, with $x_i\in\mathbb{Z}_2$.
\end{itemize}
So, the number of vertices of $G(n)$ is $2^{n+1}-4$.\\
The adjacencies (with arithmetic modulo 2) through edges are:
\begin{itemize}
	\item[$(i)$] For any $n$: $1|00\ldots0 \,\sim\, 1|11\ldots1$;
	\item[$(ii)$] For odd $n$: $1|0101\ldots0 \,\sim\, 1|1010\ldots1$;
	\item[$(iii)$] For even $n$: $0|0101\ldots01 \,\sim\, 0|1010\ldots10$;
	\item[$(iv)$] For the other vertices, $x_0|x_1\ldots x_n \,\sim\, (x_0+1)|x_1\ldots x_n$.
\end{itemize}
The adjacencies through arcs are:
\begin{itemize}
	\item[$(v)$] $x_0|x_1\ldots x_n$ $\rightarrow$ $x_0|x_2\ldots x_n (x_1+x_0)$.
\end{itemize}
The graph $G(n)$ is an in- and out-regular mixed graph with $r=z=1$.
Its only nontrivial automorphism is the one that sends $x_0|\x=x_0|x_1 x_2 x_3 \ldots$ to $x_0|\overline{\x}=x_0 |\overline{x_1}\, \overline{x_2}\, \overline{x_3} \ldots$, where $\overline{x_i}=x_i+1$ for $i=1,2,3,\ldots$
In Figure \ref{fig:G(3)}, we show the mixed graph $G(3)$.

 \begin{figure}[t]
    \begin{center}
        \includegraphics[width=10cm]{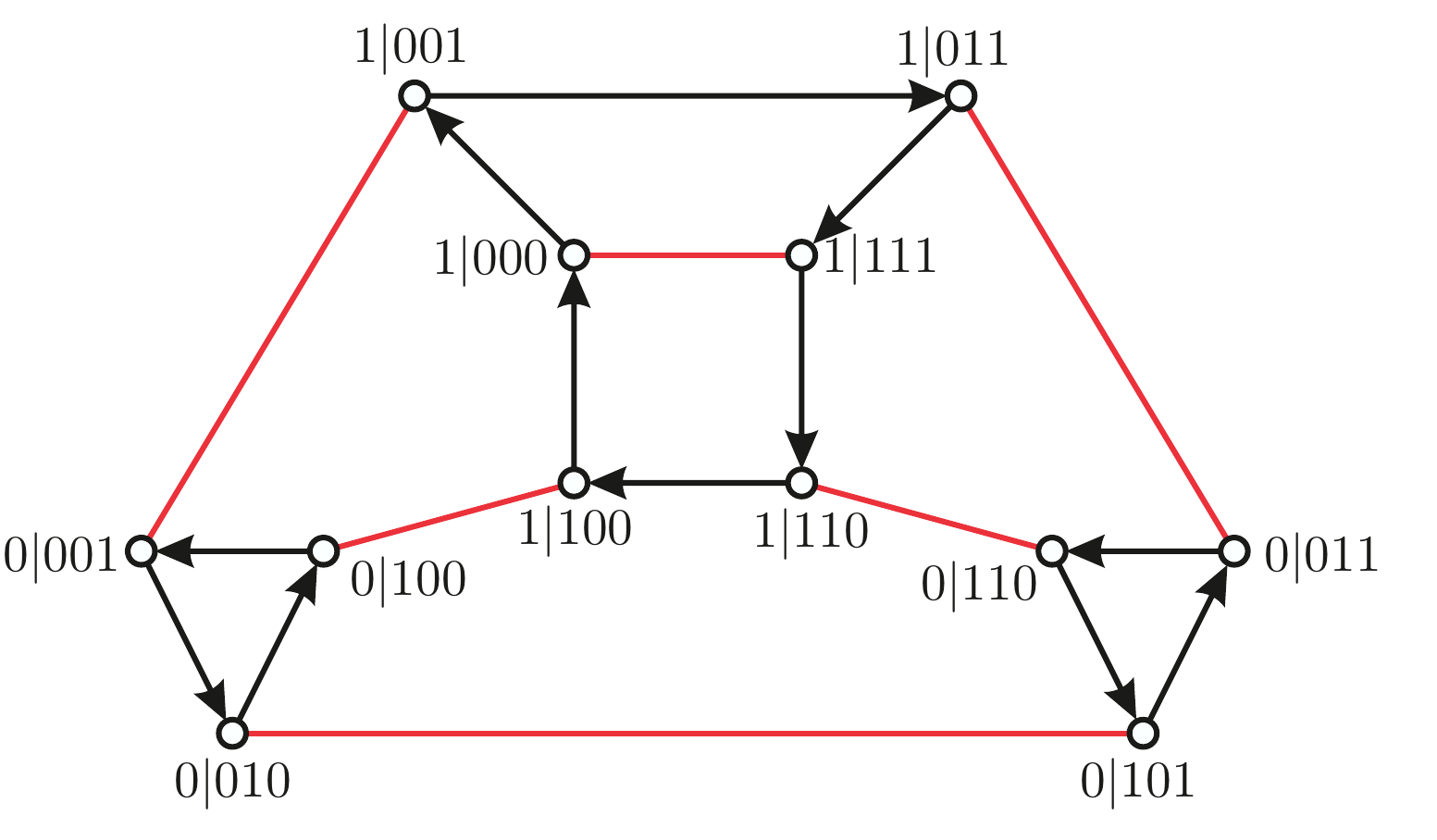}
    \end{center}
    \vskip-.5cm
	\caption{The mixed graph $G(3)$.}
	\label{fig:G(3)}
\end{figure}

Looking at the results for $n\le 12$ obtained by computer, we are led to conjecture that the diameter of $G(n)$ is $k=2n-1$. At first sight, the proof of this result seems to be involved, although we managed to prove the following.

\begin{proposition}
The diameter of $G(n)$ is at most $2n$.
\end{proposition}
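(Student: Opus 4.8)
The strategy is to bound the distance from an arbitrary vertex $\u=x_0|x_1\ldots x_n$ to an arbitrary vertex $\vecv=y_0|y_1\ldots y_n$ by exhibiting a walk of length at most $2n$ using the two moves available: the edge move (toggle $x_0$, or one of the three special edges), and the arc move $x_0|x_1\ldots x_n\rightarrow x_0|x_2\ldots x_n(x_1+x_0)$. The key observation is that a single arc move shifts the string $x_1\ldots x_n$ one position to the left and appends a new last digit equal to $x_1+x_0\pmod 2$; crucially, since $x_0$ is \emph{not} changed by an arc move but \emph{is} toggled by an edge move, by choosing whether or not to insert an edge move before each arc move we can make the appended digit be either $x_1$ or $x_1+1$ — i.e.\ an arbitrary bit, \emph{provided} $x_1$ is the current leading digit of the string being consumed. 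So a block ``optional-edge then arc'' costs $1$ or $2$ steps and lets us shift left by one while writing any prescribed bit on the right. Iterating $n$ such blocks consumes the entire original string $x_1\ldots x_n$ and writes an arbitrary target string $y_1\ldots y_n$; after that a final optional edge move fixes $x_0$ to $y_0$. The naive count is then at most $2n+1$, so the whole point is to save one step.

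The plan for saving that step is the same device used in the proof of Proposition~\ref{diam-F(n)} and in the analysis of $F^*(n)$: at the very first block one often gets a digit ``for free.'' Concretely, if the first target digit $y_1$ already equals $x_1+x_0$ (the digit the arc move would append with no edge), or if $y_1=x_1$ is arranged by toggling $x_0$ once and that same toggle is the one we would have needed anyway to set $y_0$ at the end, then one of the $n$ blocks costs only $1$ instead of $2$, or the final fix-up step is absorbed. One does a short case analysis on the pair $(x_0\oplus y_0,\ y_1\oplus x_1)$ — four cases — showing that in every case the edge move used to correct the parity of the leading coordinate can be shared between the first block and the terminal fix-up, bringing the total to at most $2n$. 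This is routine but must be written out carefully because of the interaction with $x_0$.

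The genuine obstacle is the presence of the exceptional vertices and the special edges $(i)$–$(iii)$: the generic argument above assumes that from any vertex the ordinary edge move $x_0|x_1\ldots x_n\sim(x_0+1)|x_1\ldots x_n$ is available, but at the six special vertices $1|0\ldots0$, $1|1\ldots1$, and the parity-alternating ones, the edge incident to the vertex is one of the special edges instead, which changes the string rather than just the first coordinate. One must check that (a) from a special vertex there is still a move (its special edge, or an arc) that makes progress, and more importantly (b) the target $\vecv$, if it is a special vertex, is still reachable in time — since special vertices have in-degree contributions only from specific predecessors under the arc rule, one verifies directly that the penultimate vertex on the walk can be chosen among the (at most two) arc-predecessors or the special-edge neighbour of $\vecv$, and that the remaining budget $\ge 2n-1$ suffices to reach that penultimate vertex by the generic argument. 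Handling the at most $6\times 6$ pairs of special endpoints, together with the parity of $n$ (which determines which of $(ii)$ or $(iii)$ is present and what the alternating vertices look like), is where the case-checking lives; I would organize it by first disposing of the generic case (both endpoints non-special) via the block argument above, then reducing each special-endpoint case to the generic one by peeling off one or two forced moves at that end.
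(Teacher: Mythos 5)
Your generic block argument is set up correctly: each of the $n$ arc moves appends $x_i+c$, where $c$ is the current first coordinate, so block $i$ costs $2$ exactly when the first coordinate must be toggled beforehand, and the naive total is $n$ arcs plus up to $n$ toggles plus possibly one terminal toggle to fix $y_0$, i.e. $2n+1$. The gap is the step where you claim that a four-way case analysis on $(x_0\oplus y_0,\,x_1\oplus y_1)$ lets the edge move of the first block be ``shared'' with the terminal fix-up. That cannot work, because in the worst case a toggle is forced before \emph{every} one of the $n$ arcs: the required first-coordinate values are $c_i=x_i+y_i$, and if $x_0\neq c_1$, $c_1\neq c_2$, \dots, $c_{n-1}\neq c_n$ and $c_n\neq y_0$ all hold, then all $n$ toggles and the terminal toggle are forced, and there is nothing to share --- the first coordinate after the last arc equals $x_n+y_n$, which no choice made in the first block can influence. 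Such pairs do exist: for $n$ even and $x_0=0$ the forced conditions give the target $1|\overline{x_1}\,x_2\,\overline{x_3}\ldots x_n$; in particular, from $0|00\ldots 0$ to $1|1010\ldots 10$ the head/tail overlap is also zero, so no steps are saved anywhere and your walk genuinely has length $2n+1$. Hence your plan, as written, only yields $\diam\le 2n+1$ for these pairs.

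This is exactly where the paper's proof has to do extra work: for the exceptional targets it abandons the left-to-right walk and uses a second, structurally different walk that writes the \emph{last} digit of the destination first; the conditions forcing that second walk to cost $2n+2$ form two closed chains of equalities which are either violated at least twice (saving two steps) or force the source to be $0|00\ldots 0$ and the target the alternating vertex, and that single remaining pair is then handled by an explicit walk of length $2n$ whose step pattern uses two consecutive arcs (no intermediate edge) near the beginning and at the end instead of strict edge--arc alternation. Your proposal has no counterpart to either of these devices, so the bound is not yet established. (Your choice to argue inside $G(n)$ directly and treat the special vertices and edges $(i)$--$(iii)$ as exceptional cases, rather than pass to the auxiliary digraph $G^+(n)$ on all $2^{n+1}$ vertices as the paper does, is a legitimate organizational difference and is not where the problem lies.)
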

\begin{proof}
Consider the digraph $G^+(n)$ defined by considering {\bf all} $2^{n+1}$ vertices of the form $0|x_1\ldots x_n$ and $1|x_1\ldots x_n$, with $x_i\in\mathbb{Z}_2$, with undirected adjacencies as in $(iv)$, and directed adjacencies as in $(v)$.
Then, $G^+(n)$ has the self-loops at vertices $0|00\ldots 0$ and $0|11\ldots 1$ and one digon
(or two opposite arcs) between $0|0101\ldots 01$ and $0|1010\ldots 10$ for even $n$,
and $0|0101\ldots 0$ and $0|1010\ldots 1$ for odd $n$.
In fact, if every edge of $G^+(n)$ is `contracted' to a vertex, what remains is the De Bruijn digraph $B(2,n)$, with $2^n$ vertices and diameter $n$.
Moreover, notice that $G(n)$ is obtained by removing the above four vertices and adding the edges in $(i)$, $(ii)$, and $(iii)$.
By way of examples, Figure \ref{fig:G+(2)} shows the graph $G^+(2)$, whereas Figure \ref{fig:excG+(3)} shows the mixed graph $G^+(3)$ `hanging' from a vertex with eccentricity $2n=6$.

Consequently, since the diameter of $G(n)$ is upper bounded by the diameter of $G^+(n)$, we  concentrate on proving that the diameter of $G^+(n)$ is $2n$ for $n>1$ ($G(1)$ has diameter $3$).
The proof is constructive because we show a walk of length at most $2n$ between any pair of vertices. To this end, we take the following steps:

\begin{enumerate}
\item
There is a walk of length at most $2n$ from vertex $x_0|\x=x_0|x_1x_2\ldots x_n$ to vertex
$(x_n+y_n)|y_1y_2\ldots y_n$. Indeed, as $x_i+x_i=0$ for any value of $x_i$, we get
\begin{align}
x_0|x_1x_2x_3\ldots x_n & \,\sim\, (x_1+y_1)|x_1x_2x_3\ldots x_n \rightarrow (x_1+y_1)|x_2x_3\ldots x_n y_1 \nonumber\\
                     & \,\sim\, (x_2+y_2)|x_2x_3\ldots x_ny_1 \rightarrow (x_2+y_2)|x_3\ldots x_n y_1 y_2 \nonumber\\
                     & \ \vdots \label{walk}\\
                     & \,\sim\, (x_n+y_n)|x_ny_1y_2y_3\ldots y_{n-1} \rightarrow (x_n+y_n)|y_1y_2\ldots y_n.\nonumber
\end{align}
Thus,  the initial vertex $x_0|x$ and the {\em step pattern} `$\sim\rightarrow\sim\rightarrow\stackrel{(2n)}{\cdots\cdots} \sim\rightarrow$' uniquely determine the destiny vertex.
\begin{figure}[t]
    \begin{center}
        \includegraphics[width=10cm]{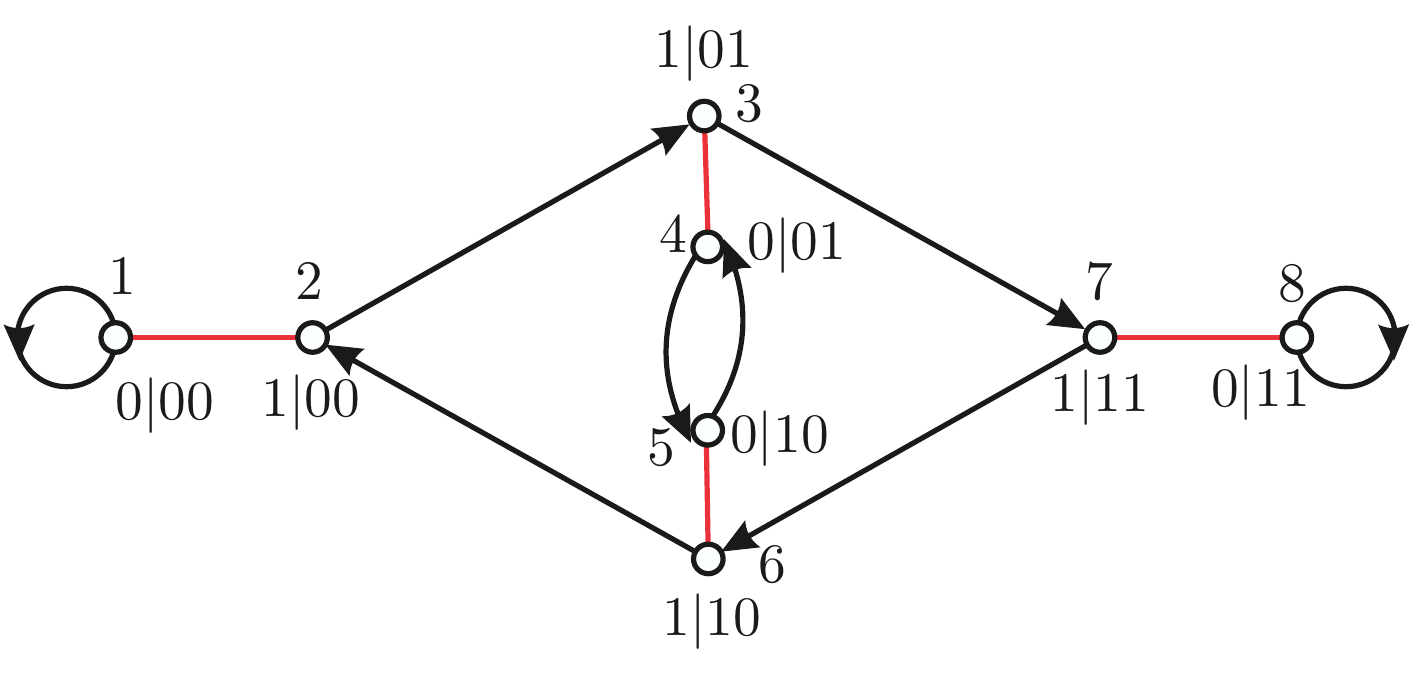}
    \end{center}
    \vskip-.5cm
	\caption{The graph $G^+(2)$.}
	\label{fig:G+(2)}
\end{figure}
 \begin{figure}[t]
	\begin{center}
		\includegraphics[width=12cm]{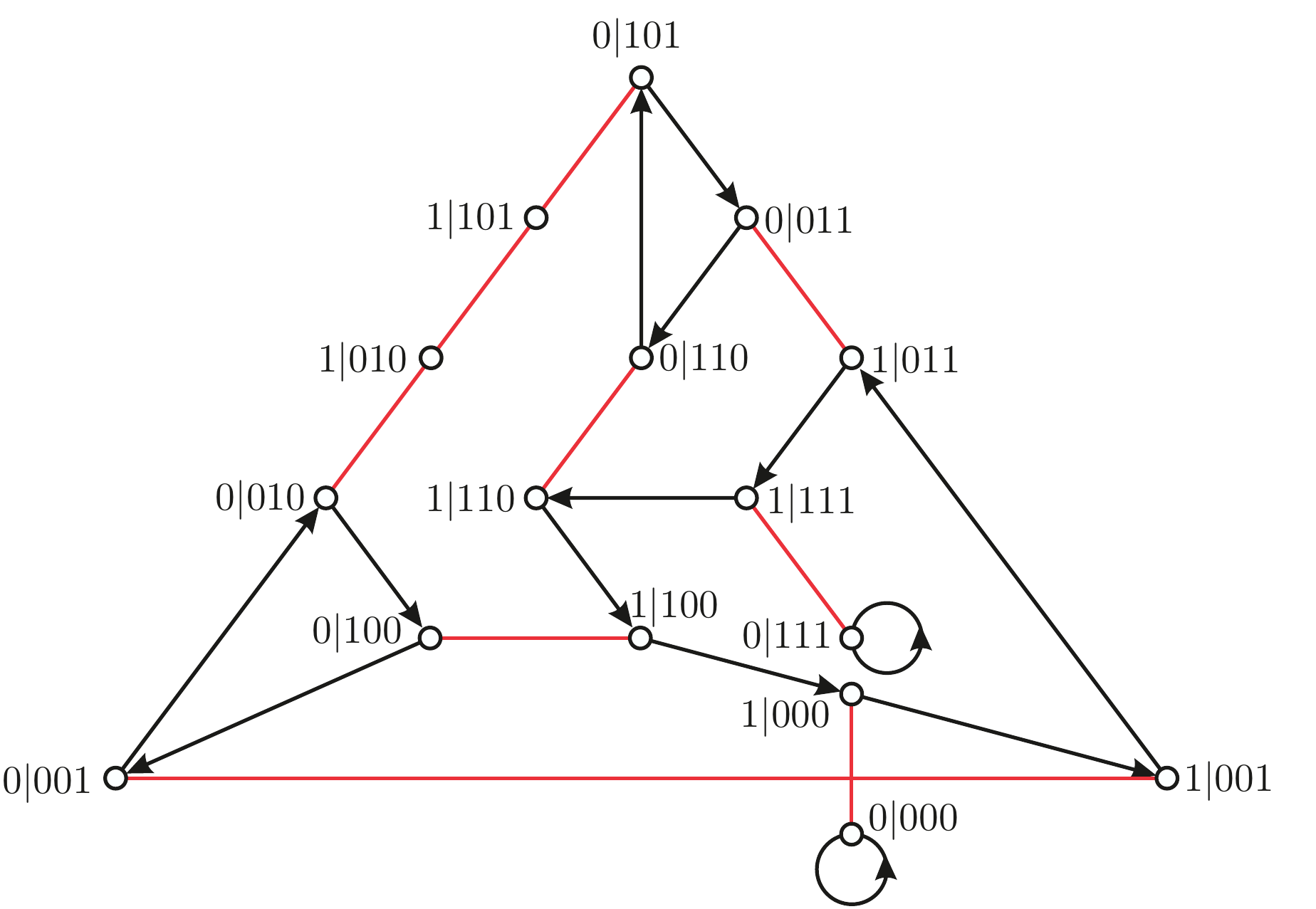}
	\end{center}
	\vskip-.5cm
	\caption{The graph $G^+(3)$ `hanging' from vertex $0|101$, with eccentricity $6$. 
	}
	\label{fig:excG+(3)}
\end{figure}
\item
Clearly, some of the steps in \eqref{walk} are {\bf not} necessary if some of the following situations occur:
\begin{enumerate}
\item
The `intersection' of the sequences $\x=x_1x_2\ldots x_n$ and $\y=y_1y_2\ldots y_n$ (that is, the maximum length of the last subsequence of $\x$ that coincides with a first subsequence of $\y$), denoted $|\x\cap\y|$, is greater than zero. (For instance, for
$\x=0\ldots 010$ and $\y=100\ldots 0$, we get $|\x\cap\y|=2$.)
In this case, the first $\ell=|\x\cap\y|$  step pairs `$\sim\rightarrow$' of the walk in \eqref{walk} are useless and can be avoided. Then, we say that we {\em save} $2\ell$ steps.
\item
Some of the following equalities hold:
$x_0=x_1+y_1$, or $x_i+y_i=x_{i+1}+y_{i+1}$ for some $i=1,\ldots,n-1$. In this case, some steps `$\sim$' are absent. More precisely, if either both equalities $x_i=y_i$ and $x_{i+1}=y_{i+1}$ (or both inequalities $x_i\neq y_i$ and $x_{i+1}\neq y_{i+1}$) hold, then the step `$\sim$' through an edge leading to $(x_{i+1}+y_{i+1})|x_{i+1}\ldots x_ny_1\ldots y_{i}\ldots$ is absent. So, we save $1$ step.
\end{enumerate}
Thus, if we can save some steps, one last step $(x_n+y_n)|\y \sim (\overline{x_n+y_n})|\y$ assures a walk of length at most $2n$ from $x_0|\x$ to $y_0|\y$ for any $y_0\in\{0,1\}$.
\item
In the `worst case', the walk in \eqref{walk} consists of exactly $2n$ steps (vertices at maximum distance) if $|\x\cap\y|=0$ and none of the equalities in (b) holds.
Assuming first that $x_0=0$ (the case $x_0=1$ is similar), the latter occurs when $x_1+y_1=1\Rightarrow y_1=\overline{x_1}$, $x_2+y_2=0\Rightarrow y_2=x_2$, $x_3+y_3=1\Rightarrow y_3=\overline{x_3}$, and so on.
Consequently, starting from $0|\x=0|x_1x_2x_3\ldots x_n$, we only need to test the destiny vertices of the form $1|\y=1|\overline{x_1} x_2 \overline{x_3}\ldots x_n$ ($n$ even),
and $0|\z=$ $0|\overline{x_1} x_2 \overline{x_3}\ldots \overline{x_n}$ ($n$ odd), with the additional constraints $|\x\cap\y|=|\x\cap\z|=0$.
\item
For these cases, the strategy is to put first the last digit of destiny. Namely, if $n$ is even,
\begin{align}
0|x_1x_2x_3\ldots x_n & \, \rightarrow 0|x_2x_3\ldots x_n x_1 \nonumber\\
                     & \,\sim\, (x_2+\overline{x_1})|x_2x_3\ldots x_n x_1 \rightarrow (x_2+\overline{x_1})|x_3x_4\ldots x_n x_1 \overline{x_1}\, \nonumber\\
                      & \,\sim\,(x_3+x_2)|x_3x_4\ldots x_n x_1 \overline{x_1}\,  \rightarrow
                     (x_3+x_2)|x_4x_5\ldots x_n x_1 \overline{x_1}\, x_2\nonumber\\
                     & \,\sim\,(x_4+\overline{x_3})|x_4 x_5\ldots x_n x_1 \overline{x_1}\,x_2  \rightarrow
                     (x_4+\overline{x_3})|x_5\ldots x_n x_1 \overline{x_1}\, x_2 \overline{x_3}\nonumber\\
                     & \ \vdots \label{walk2}\\
                     & \,\sim\, (x_n+\overline{x_{n-1}})|x_n x_1\overline{x_1}x_2\ldots \overline{x_{n-1}} x_{n-2}\,\nonumber\\
                     & \,\rightarrow\, (x_n+\overline{x_{n-1}})| x_1\overline{x_1}x_2\ldots x_{n-2}\overline{x_{n-1}}\nonumber\\
                      & \, \sim\, (x_1+x_n)|x_1\overline{x_1}x_2\ldots x_{n-2}\overline{x_{n-1}}\,
                       \rightarrow (x_1+x_n)|\overline{x_1}x_2\overline{x_3}\ldots x_{n}\nonumber\\
                    & \,\sim\, 1|(x_1+x_n)|\overline{x_1}x_2\overline{x_3}\ldots x_{n}.\nonumber
                     \end{align}
This walk can have $2n+2$ steps whenever all steps `$\sim$' through edges are present. This is the case when
$x_2+\overline{x_1}\neq 0$, $x_3+x_2\neq x_2+\overline{x_1}$, $x_4+\overline{x_3}\neq  x_3+x_2$,\ldots, $x_1+x_{n}\neq x_{n}+\overline{x_{n-1}}$, and $x_1+x_{n}\neq 1$.
In turn, this implies the $n+1$ equalities
\begin{align}
x_1 &=x_3,\ x_3=x_5,\ \ldots, \ x_{n-3}=x_{n-1},\ x_{n-1}=x_1,\label{equs1}\\
x_1 &=x_2,\ x_2=x_4,\ x_4=x_6,\ \ldots, \ x_{n-2}=x_{n},\ x_n=x_1. \label{equs2}
\end{align}
Note that these sequences of equalities form two cycles (with odd and even subscripts)  rooted at $x_1$.
Thus, the number of inequalities, if any, must be at least $2$. In this case, at least $2$ steps `$\sim$' are absent in \eqref{walk2}, and we have a walk of length at most $2n$ between the vertices considered.\\
Otherwise, if {\bf all} the equalities \eqref{equs1}--\eqref{equs2} hold,
 the initial vertex must be  $0|000\stackrel{(n)}{\ldots} 00$ (the first digit $x_1$ can be fixed to $0$ since the mixed graph has an automorphism that sends $x_0|x_1x_2\ldots x_n$ to  $x_0|\overline{x_1}\,\overline{x_2}\,\ldots \overline{x_n}$), and the destiny vertex is $0|1010\stackrel{(n)}{\ldots} 10$.
The same reasoning for $n$ odd leads that, in the worst case (walk in \eqref{walk2} of length $2n+2$), the initial vertex is $0|000\ldots 0$ and the final vertex $0|1010\ldots 1$.
In such cases, we have a particular walk of the desired length.
\item
There is a walk of length $2n$ from $0|000\ldots 0$ to $1|1010\ldots 10$ ($n$ even) or to
$0|1010\ldots 01$ ($n$ odd) by using the following step pattern
$$
\sim\,\rightarrow\,\rightarrow \sim\,\rightarrow\, \sim\,\rightarrow\,\stackrel{(2n)}{\cdots\cdots}
\sim\,\rightarrow\,\rightarrow.
$$
For instance, for $n=6$, we get
\begin{align}
0|000000 & \,\sim\, 1|000000\rightarrow 1|000001 \rightarrow 1|000011\nonumber\\
                     & \,\sim\, 0|000011 \rightarrow 0|000110 \nonumber\\
                     & \,\sim\, 1|000110 \rightarrow 1|001101 \nonumber\\
                     & \,\sim\, 0|001101 \rightarrow 0|011010 \nonumber\\
                     & \,\sim\, 1|011010 \rightarrow 1|110101 \rightarrow 1|101010,\nonumber
                     \end{align}
and, for $n=7$,
\begin{align}
0|0000000 & \,\sim\, 1|0000000 \rightarrow 1|0000001 \rightarrow 1|0000011\nonumber\\
                     & \,\sim\, 0|0000011 \rightarrow 0|0000110 \nonumber\\
                     & \,\sim\, 1|0000110 \rightarrow 1|0001101 \nonumber\\
                     & \,\sim\, 0|0001101 \rightarrow 0|0011010 \nonumber\\
                     & \,\sim\, 1|0011010 \rightarrow 1|0110101 \nonumber\\
                     & \,\sim\, 0|0110101 \rightarrow 0|1101010 \rightarrow 0|1010101.\nonumber
\end{align}
\item
The case $x=1$ is similar, and we only mention the main facts.
Now, the `worst case' ($2n$ steps) in the walk in \eqref{walk} ($2n$ steps) occurs when,
 starting from $1|\x=1|x_1x_2x_3\ldots x_n$, we want to reach the destiny vertices of the form $0|\y=0|x_1 \overline{x_2} x_3\overline{x_4}\ldots \overline{x_n}$ ($n$ even),
 or $1|\z=1|x_1 \overline{x_2} x_3\overline{x_4}\ldots x_n$ ($n$ odd), with the additional constraints $|\x\cap\y|=|\x\cap\z|=0$.
 Now, following the same strategy as in step 4 above, it turns out that for the case of $2n+2$ steps, the following conditions must hold (assuming $n$ odd, the even case is similar):
 \begin{align}
x_1 &=x_2,\ x_2=x_3,\ \ldots,\ x_{n-1}=x_{n},\ x_{n}\neq x_1,\label{equs3}
\end{align}
which are clearly incompatible, and at least there must be another inequality (the last one in \eqref{equs3} is forced since the final vertex has $x_0=1$).  Again,  at least $2$ steps `$\sim$' are absent in \eqref{walk2}, and we have a walk of length at most $2n$ between the vertices considered. For example, for $n=5$, and assuming that
$x_4\neq x_5$ and $x_1=0$, the walk of $10$ steps from $1|00001$ to $1|01011$ is:
\begin{align}
1|00001 & \,\rightarrow\, 1|00011\nonumber\\
                     & \,\sim\, 0|00011 \rightarrow 0|00110 \nonumber\\
                     & \,\sim\, 1|00110 \rightarrow 1|01101 \nonumber\\
                     & \,\sim\, 0|01101 \rightarrow 0|11010 \nonumber\\
                     & \,\rightarrow\, 0|10101 \rightarrow 0|01011 \sim 1|01011.\nonumber
\end{align}
\end{enumerate}
This completes the proof.
\end{proof}

In fact, we implicitly proved the following.
\begin{lemma}
For every $n>1$, the mixed graph $G^+(n)$ satisfies the following.
\begin{itemize}
\item[$(i)$]
The vertices $0|00\ldots 0$ and $0|11\ldots 1$ have maximum eccentricity $2n$.
\item[$(ii)$]
The vertices $1|00\ldots 0$ and $1|11\ldots 1$ have eccentricity $2n-1$.
\item[$(iii)$]
If $n\ge 5$, the vertices $1|00\ldots 01$ and $1|11\ldots 10$ have eccentricity $2n-2$.
\end{itemize}
\end{lemma}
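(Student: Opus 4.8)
The plan is to obtain the lemma as a refinement of the case analysis already carried out in the proof of the preceding proposition, supplemented by the two lower-bound devices that that proof does not require: the projection of $G^+(n)$ onto the De Bruijn digraph $B(2,n)$ obtained by contracting all edges, and a count of how often the first coordinate $x_0$ must be flipped along a walk. The model I would use is this. Encode a walk leaving a vertex $x_0|x_1\dots x_n$ by the number $a$ of arcs it uses together with the sequence $c_1,\dots,c_a$ of values of the first coordinate at those arcs: the $j$-th arc appends the digit $b_j+c_j$, where $b_j$ is the digit it shifts off ($b_j=x_j$ for $j\le n$ and $b_j=z_{j-n}$ for $j>n$, with $z_i$ the $i$-th appended digit), so the edges of the walk are exactly the indices $j$ with $c_{j-1}\ne c_j$ together with a possible terminal flip, and the length of the walk is $a$ plus this number of flips. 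Contracting edges turns the shift action into $B(2,n)$, so any walk to $y_0|y_1\dots y_n$ satisfies $a\ge n-|\x\cap\y|$ (the distance in $B(2,n)$); and, once $a$ is fixed, requiring the last $n$ appended digits to spell $y_1\dots y_n$ determines $c_1,\dots,c_a$ up to a bounded number of free bits, hence determines the least number $e_a$ of flips. Thus $\dist(x_0|\x,\,y_0|\y)=\min_a(a+e_a)$, and the lemma reduces to evaluating this minimum for the relevant pairs.

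Part $(i)$: the upper bound is immediate because the preceding proposition shows $\diam G^+(n)=2n$. For the lower bound I would take the target to be the alternating vertex $0|1010\dots$; by the automorphism $x_0|x_1\dots x_n\mapsto x_0|\overline{x_1}\dots\overline{x_n}$ this also settles $0|11\dots1$. Its string has empty overlap with $00\dots0$, so $a\ge n$; for $a=n$ the appended digits are forced to spell $1010\dots$, so (the shifted-off digits all being $0$) $c_j=y_j$ is forced and alternating, which costs $n$ flips, giving $a+e\ge 2n$; and the recursion $z_j=z_{j-n}+c_j$ shows that $a>n$ only raises the forced flip count. Hence $\min_a(a+e_a)=2n$, and these two vertices have eccentricity exactly $2n$, the maximum.

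Parts $(ii)$ and $(iii)$: the same machinery applies, but the optimal $a$ now exceeds $n$. For the upper bound from $1|00\dots0$ (and, by the automorphism, $1|11\dots1$) I would show that $a=n$ already gives $e_n\le n-1$ for every target whose string is not alternating, whereas for an alternating target $a=n+1$ gives $e_{n+1}\le n-2$; either way the distance is at most $2n-1$. For $1|00\dots01$ (and $1|11\dots10$) with $n\ge5$ the analogous bound holds with one further unit to spare — the admissible $a$ is one of $n,n+1,n+2$ depending on the target — yielding distance at most $2n-2$; the hypothesis $n\ge5$ is exactly what lets the longest ``$+2$'' detour fit inside a length-$n$ string without forcing extra flips, which is why $(iii)$ fails for $n\le4$. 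The matching lower bounds come from choosing the appropriate alternating target: from $1|00\dots0$ the vertex $1|1010\dots10$ ($n$ even) or $0|1010\dots01$ ($n$ odd) realizes $\min_a(a+e_a)=2n-1$, attained at $a=n+1$; from $1|00\dots01$ the corresponding alternating vertex realizes $\min_a(a+e_a)=2n-2$, attained at $a=n+2$.

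The main obstacle is the lower-bound half, and within it the assertion that $\min_a(a+e_a)$ is attained exactly at the stated value. Bounding $a$ below by the De Bruijn distance is trivial, and for $(i)$ the choice $a=n$ already gives the sharp answer; but for $(ii)$ and $(iii)$ the optimal $a$ is strictly larger, so one must simultaneously produce that $a$ and exclude every other value, and this is the fiddly digit-by-digit analysis of how the appended digits, linked by $z_j=z_{j-n}+c_j$, constrain the sequence $c_1,\dots,c_a$ and its number of sign changes. It is elementary, but it is where essentially all the content of the lemma beyond the preceding proposition lies.
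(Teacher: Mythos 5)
Your reduction of distances in $G^+(n)$ to $\min_a(a+e_a)$ (number of arcs plus number of first-coordinate flips) is sound, and it is organized quite differently from the paper, which simply reads $(i)$ and $(ii)$ off the constructive case analysis already carried out in the proof of the preceding proposition and, for $(iii)$, exhibits the single explicit path $1|0\ldots01\sim 0|0\ldots01\rightarrow\cdots\rightarrow 1|10\ldots0\rightarrow 1|0\ldots0\sim 0|0\ldots0$ of length $n+3\le 2n-2$ when $n\ge 5$. However, your write-up has genuine gaps exactly where you concede the content lies. First, the one concrete upper-bound claim you make for $(ii)$ is false as stated: from $1|00\ldots0$ with $a=n$ the flip sequence is forced to be $c_j=y_j$, so for a target whose string begins with $0$, e.g.\ $0|011$ for $n=3$, one gets $e_n=n$, not $\le n-1$, even though the string is not alternating; the distance is still small, but only because a nonzero overlap $|\x\cap\y|$ permits a smaller $a$. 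So your dichotomy ``non-alternating $\Rightarrow$ $a=n$ suffices, alternating $\Rightarrow$ $a=n+1$ suffices'' must be replaced by a finer split (overlap $\ge 1$; overlap $0$ and not alternating; overlap $0$ and alternating, keeping track of the terminal flip $y_n\ne y_0$) over all targets -- which is essentially the case analysis the paper performs in the preceding proposition and which your proposal does not actually carry out.

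Second, all of your lower bounds rest on unproved exclusions of the other values of $a$. For $(i)$ you verify $a=n$ and assert that $a>n$ ``only raises the forced flip count''; what is needed is $e_a\ge 2n-a$ for every $n<a<2n$, which is a different statement, and the flip count does not in fact rise: for even $n$, source $0|0\ldots0$ and target $0|1010\ldots10$, one computes $e_{n+1}=e_{n+2}=n$ (still consistent with the bound, but showing your monotonicity heuristic is not an argument). For $(ii)$ and $(iii)$ the statements ``$\min_a(a+e_a)=2n-1$, attained at $a=n+1$'' and ``$=2n-2$, attained at $a=n+2$'' are pure assertions; proving them requires, for each admissible $a$, the digit-by-digit control of the forced sequence $c_1,\ldots,c_a$ via $z_j=z_{j-n}+c_j$ that you explicitly defer, and the role of $n\ge5$ in $(iii)$ is only gestured at, whereas the paper pins it down with the inequality $n+3\le 2n-2$. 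In short, the framework is workable and genuinely different from the paper's route, but as it stands the decisive computations -- the full upper-bound analysis over all targets and the exclusion of all other arc counts in each lower bound -- are missing, and one of the few intermediate claims actually stated is incorrect.
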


\begin{proof}
$(i)$ and $(ii)$ follow from the previous reasoning. To prove $(iii)$, we only need to check the distance from
$1|00\ldots 01$ to $0|00\ldots 0$. A shortest path between these two vertices is
$1|00\ldots 01\sim 0|00\ldots 01\rightarrow 0|0\ldots 010\rightarrow \cdots\rightarrow 0|10\ldots 00 \sim
1|10\ldots 00\rightarrow 1|00\ldots 00\sim 0|00\ldots 0$ of length $n+3\le 2n-2$ if $n\ge 5$.
\end{proof}

Let $\Psi_0$ and $\Psi_1$ be the functions that map a vertex $x|\x$ to its adjacent vertex from an edge or an arc, respectively. That is,
\begin{align*}
\Psi_0(x_0|x_1x_2\ldots x_n)&=\overline{x_0}|x_1x_2\ldots x_n,\\
\Psi_1(x_0|x_1x_2\ldots x_n)&=x_0|x_1x_2\ldots (x_0+x_1).
\end{align*}
Let $\Phi=(\phi_1,\phi_2,\ldots,\phi_n)$ be the function that maps every $x_i$ to either $x_i$ or $\overline{x_i}$, for $i=1,2,\ldots,n$.
\begin{lemma}
For any fixed functions $\Psi_j$ and $\Phi$, and  first digit $x_0=0,1$,  we have
$$
\Psi_j(x_0|\Phi(\x))=\Phi(\Psi_j(x_0|\x)),
$$
where $\Phi$ only acts on the digits $x_1,x_2,\ldots,x_n$.
\end{lemma}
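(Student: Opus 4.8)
The plan is to verify the identity by a direct computation, handling the edge map $\Psi_0$ and the arc map $\Psi_1$ in turn; the statement really only records that digit‑complementation commutes with the two operations that produce the (unique) neighbours of a vertex in $G(n)$.

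For $j=0$ there is essentially nothing to compute. The map $\Psi_0$ changes only the leading symbol $x_0$, whereas $\Phi$ changes only the block $x_1x_2\ldots x_n$, so the two maps act on disjoint coordinates and
$$
\Psi_0(x_0|\Phi(\x))=\overline{x_0}|\Phi(\x)=\Phi(\overline{x_0}|\x)=\Phi(\Psi_0(x_0|\x)),
$$
whatever $x_0$ and $\Phi$ are.

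For $j=1$ the only arithmetic needed is that of $\mathbb{Z}_2$. Recall that $\Psi_1$ deletes $x_1$, left‑shifts the block, and appends the new symbol $x_0+x_1$. Writing $\Phi=(\phi_1,\ldots,\phi_n)$, I would expand both sides symbol by symbol. On the left,
$$
\Psi_1(x_0|\Phi(\x))=\Psi_1\big(x_0\,|\,\phi_1(x_1)\phi_2(x_2)\ldots\phi_n(x_n)\big)=x_0\,|\,\phi_2(x_2)\phi_3(x_3)\ldots\phi_n(x_n)\,\big(x_0+\phi_1(x_1)\big),
$$
while on the right
$$
\Phi(\Psi_1(x_0|\x))=\Phi\big(x_0\,|\,x_2x_3\ldots x_n\,(x_0+x_1)\big)=x_0\,|\,\phi_1(x_2)\phi_2(x_3)\ldots\phi_{n-1}(x_n)\,\phi_n(x_0+x_1).
$$
Comparing position by position, the shifted digits agree because $\Phi$ treats every digit position in the same way (for the relevant maps — the identity and the complementation $\x\mapsto\overline{\x}$, which are the only automorphism‑type digit maps established earlier for $G(n)$ — all the $\phi_i$ coincide), and the appended digit agrees because complementation passes through the affine map $t\mapsto x_0+t$: indeed $x_0+\overline{x_1}=\overline{x_0+x_1}$ over $\mathbb{Z}_2$, and trivially $x_0+x_1=x_0+x_1$ in the non‑complemented case. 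Hence the two vertices coincide, which is the claim.

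The only step that requires any care, and the one I would write out most carefully, is the index bookkeeping in the $j=1$ case after the shift, together with isolating the single genuinely new symbol $x_0+x_1$ as the place where the identity $x_0+\overline{x_1}=\overline{x_0+x_1}$ is invoked; everything else is a relabelling. Conceptually there is no real obstacle here: since $\Phi$ is a graph automorphism and $G(n)$ has undirected, in‑ and out‑degree $1$, it must commute with the well‑defined "undirected neighbour" and "out‑neighbour" maps $\Psi_0$ and $\Psi_1$, and the computation above is just this fact made explicit.
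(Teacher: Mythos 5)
Your expansion of both sides and the identity $x_0+\overline{x_1}=\overline{x_0+x_1}$ are exactly the ingredients of the paper's own (very terse) proof, but as written your argument does not establish the lemma as stated. The lemma quantifies over \emph{every} tuple $\Phi=(\phi_1,\ldots,\phi_n)$ with each $\phi_i$ chosen independently to be the identity or complementation; you verify only the two uniform cases (all $\phi_i$ equal), and you license this restriction by declaring those ``the only relevant maps'', which is an assumption about the statement rather than a proof of it. The trouble comes from your strictly position-wise reading of $\Phi(\Psi_1(x_0|x_1\ldots x_n))$ as $x_0|\phi_1(x_2)\ldots\phi_{n-1}(x_n)\,\phi_n(x_0+x_1)$: under that reading the identity genuinely fails for non-uniform $\Phi$ (for $n=2$, $\phi_1$ complementation, $\phi_2$ identity, starting from $0|00$ one side gives $0|01$ and the other $0|10$), so your comparison forces all $\phi_i$ to coincide.

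The intended (and missing) idea is that $\Phi$ acts on the digits by their original names: after applying $\Psi_1$, the digit in position $i$ is $x_{i+1}$ and receives $\phi_{i+1}$, while the appended digit $x_0+x_1$ receives $\phi_1$, which is consistent precisely because $x_0+\overline{x_1}=\overline{x_0+x_1}$. With this re-indexing convention one gets $\Psi_1(x_0|\Phi(\x))=x_0|\phi_2(x_2)\ldots\phi_n(x_n)\,(x_0+\phi_1(x_1))=\Phi(\Psi_1(x_0|\x))$ for every componentwise choice of $\Phi$, which is what the paper's proof records. So the concrete gap is the case of non-uniform $\Phi$ (equivalently, the absent re-indexing convention); your observation that the literal position-wise version fails outside the uniform cases is correct and exposes an imprecision in the paper's phrasing, but your proposal proves only a special case of the claimed statement, and the closing appeal to ``$\Phi$ is a graph automorphism'' is circular in any event, since commutation with $\Psi_0$ and $\Psi_1$ is exactly what makes the complementation map an automorphism.
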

\begin{proof}
\begin{align*}
\Psi_0(x_0|\Phi(\x)) &=\Psi_0(x_0|\phi_1(x_1)\phi_2(x_2)\ldots\phi_n(x_n))=\overline{x_0}|\phi_1(x_1)\phi_2(x_2)\ldots\phi_n(x_n)\\
 &=\Phi(\Psi_0(x_0|\x)).\\
 \Psi_1(x_0|\Phi(\x)) &=\Psi_1(x_0|\phi_1(x_1)\phi_2(x_2)\ldots\phi_n(x_n))=x_0|\phi_2(x_2)\ldots\phi_n(x_n)(x_0\phi_1(x_1))\\
 &=\Phi(\Psi_1(x_0|\x)).
\end{align*}
\end{proof}

Another property of the mixed graph $G^+(n)$ for $n>1$ is that from every pair of (not necessarily distinct) vertices $u$ and $v$, there is at least a walk of length $2n$ from $u$ to $v$.
For instance, for $n=2$, fixing as before $x_1=0$ and setting $y=x_0+x_2$, we have the following walks
of length $4$ from  $x_0|0 x_2$ to every vertex of $G^+(2)$.

\begin{align}
x_0|0 x_2 & \,\sim\, \overline{x_0}|0x_2\,\sim\, x_0|0x_2\,\rightarrow\,x_0|x_2x_0\,\rightarrow\, x_0|x_0 (x_0+x_2)
 &= x_0|x_0 y\nonumber\\
          & \,\rightarrow\,  x_0|x_2 x_0\,\sim\, \overline{x_0}|x_2 x_0\,\rightarrow\,\overline{x_0}|x_0(\overline{x_0}+x_2)\sim\, x_0|x_0(\overline{x_0}+x_2)
& = x_0|x_0\overline{y}\nonumber\\
          & \,\sim\, \overline{x_0}|0 x_2\,\rightarrow\, \overline{x_0}|x_2 \overline{x_0}\,\sim\,x_0|x_2 \overline{x_0}\,\rightarrow\, x_0|\overline{x_0} (x_0+x_2)
 & = x_0|\overline{x_0}y         \nonumber\\
          & \,\sim\, \overline{x_0}|0x_2\,\rightarrow\, \overline{x_0}|x_2\overline{x_0}\,\rightarrow\,\overline{x_0}|\overline{x_0}(\overline{x_0}+x_2)\,\sim\, x_0|\overline{x_0}(\overline{x_0}+x_2)
 & = x_0|\overline{x_0}\overline{y} \nonumber\\
           & \,\rightarrow\, x_0|x_2 x_0 \,\rightarrow\,x_0|x_0 (x_0+x_2)\,\rightarrow\, x_0|(x_0+x_2) 0\,\sim\,\overline{x_0}|(x_0+x_2) 0
& = \overline{x_0}|y0 \nonumber\\
& \,\rightarrow\, x_0|x_2 x_0 \,\rightarrow\,x_0|x_0 (x_0+x_2)\,\sim\, \overline{x_0}|(x_0+x_2)\,\rightarrow\,\overline{x_0}|(x_0+x_2) 1
& = \overline{x_0}|y1\nonumber\\
& \,\sim\, \overline{x_0}|0 x_2 \,\rightarrow\,\overline{x_0}|x_2 \overline{x_0}\,\rightarrow\, \overline{x_0}|\overline{x_0}(\overline{x_0}+x_2)\,\rightarrow\,\overline{x_0}|(\overline{x_0}+x_2) 0
& = \overline{x_0}|\overline{y}0 \nonumber\\
& \,\rightarrow\, x_0|x_2 x_0\,\sim\, \overline{x_0}|x_2x_0 \,\rightarrow\, \overline{x_0}|x_0(\overline{x_0}+x_2) 0\,\rightarrow\,\overline{x_0}|(\overline{x_0}+x_2) 1
& = \overline{x_0}|\overline{y}1.\nonumber
\end{align}

Working with the adjacency matrix $\A$ of $G^+(2)$ (indexed according to Figure \ref{fig:G+(2)}), the above property is apparent when we look at the power $\A^4$.

\begin{align*}
\A &=
\left(
\begin{array}{cccccccc}
1 & 1 & 0 & 0 & 0 & 0 & 0 & 0\\
1 & 0 & 1 & 0 & 0 & 0 & 0 & 0\\
0 & 0 & 0 & 1 & 0 & 0 & 1 & 0\\
0 & 0 & 1 & 0 & 1 & 0 & 0 & 0\\
0 & 0 & 0 & 1 & 0 & 1 & 0 & 0\\
0 & 1 & 0 & 0 & 1 & 0 & 0 & 0\\
0 & 0 & 0 & 0 & 0 & 1 & 0 & 1\\
0 & 0 & 0 & 0 & 0 & 0 & 1 & 1
\end{array}
\right),
& \A^4 =
\left(
\begin{array}{cccccccc}
5 & 3 & 3 & 1 & 1 & 1 & 1 & 1\\
3 & 3 & 1 & 3 & 1 & 1 & 3 & 1\\
1 & 1 & 3 & 1 & 3 & 3 & 1 & 3\\
1 & 1 & 1 & 5 & 1 & 3 & 3 & 1\\
1 & 3 & 3 & 1 & 5 & 1 & 1 & 1\\
3 & 1 & 3 & 3 & 1 & 3 & 1 & 1\\
1 & 3 & 1 & 1 & 3 & 1 & 3 & 3\\
1 & 1 & 1 & 1 & 1 & 3 & 3 & 5\\
\end{array}
\right).
\end{align*}

\subsection{The $n$-line mixed graphs}

Let $G=(V,A)$ be a $2$-regular digraph with a given $1$-factorization, that is, containing two arc-disjoint spanning $1$-regular digraphs $H_1$ and $H_2$. Assuming that the arcs of $H_1$ have color blue and the arcs of $H_2$ have color red, we can also think about a (proper) arc-coloring $\gamma$ of $G$. Then, if $xy$ represents an arc of $G$, we denote its color as $\gamma(xy)$.\\   
Given an integer $n\ge 3$, the vertices of the $n$-line mixed graph $H(n)=H_n(G)$ are the set of $n$-walks in $G$,
$x_1x_2\ldots x_{n-1}x_n$, with $x_i\in V$ and $x_ix_{i+1}\in A$, for $i=1,\ldots,n-1$. The adjacencies of $H(n)$ are as follows:
\begin{align}
x_1x_2\ldots x_{n-1}x_n & \sim\  y_1x_2\ldots x_{n-1}x_n \mbox{\  (edges),}
\label{edgesH(n)}
\end{align}
where $\gamma(y_1x_2)\neq \gamma(x_1x_2)$; and
\begin{align}
x_1x_2\ldots x_{n-1}x_n & \rightarrow\ x_2\ldots x_{n-1}x_ny_{n+1} \mbox{\  (arcs),}
\label{arcsH(n)}
\end{align}
where 
$\gamma(x_ny_{n+1})\!=$red if 
$\gamma(x_1x_2)=\gamma(x_{n-1}x_n)$,
and   $\gamma(x_ny_{n+1})\!=$blue if  $\gamma(x_1x_2)\neq\gamma(x_{n-1}x_n)$.
The reason for the name of $H_n(G)$ is because when we contract all its edges, so identifying the vertices in \eqref{edgesH(n)}, the resulting digraph is the $(n-1)$-iterated line digraph $L^{n-1}(G)$ of $G$, see Fiol, Yebra, and Alegre \cite{FiYeAl84}. Indeed, under such an operation, each pair of vertices in \eqref{edgesH(n)} becomes a vertex that can be represented by the sequence $x_2x_3\ldots x_n$, which, according to \eqref{arcsH(n)}, is adjacent to the two vertices $x_3\ldots x_ny_{n+1}$ with $y_{n+1}\in \Gamma^+ (x_n)$ in $G$.
\\
In the following result, we describe other basic properties of $H_n(G)$.
\begin{proposition}
Let $G=(V,A)$ be a digraph with $r$ vertices and diameter $s$, having a $1$-factorization. For a given $n\ge 3$,  the following holds.
\begin{itemize}
\item[$(i)$]
The mixed graph $H_n=H_n(G)$ has $N=r\cdot 2^{n-1}$ vertices,  and it is totally $(1,1)$-regular with no digons.
\item[$(ii)$]
The diameter of $H_n$ satisfies $k\le 2(s+n)-3$. 
\end{itemize}
\end{proposition}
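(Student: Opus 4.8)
The plan is to prove $(i)$ by direct counting and incidence arguments, and to prove $(ii)$ by an explicit walk construction in $H_n$ that feeds the coordinates of the target vertex in one at a time, using the edges to control which colour the next arc uses.

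For $(i)$, an $n$-walk of $G$ is determined by its first vertex ($r$ choices) together with a choice of one of the two out-arcs at the current vertex at each of the remaining $n-1$ positions, so $H_n$ has $N=r\cdot 2^{n-1}$ vertices. Out-regularity: the word $x_1\dots x_n$ determines the colour prescribed in \eqref{arcsH(n)}, and by the $1$-factorization there is exactly one out-arc of $x_n$ of that colour, hence exactly one out-neighbour. In-regularity: fix $v=z_1\dots z_n$; an in-arc must come from a vertex $x_1z_1\dots z_{n-1}$ with $x_1\in\Gamma^-(z_1)$, and since the two arcs into $z_1$ carry the two different colours while the colour condition of \eqref{arcsH(n)} forces $\gamma(x_1z_1)$ to equal one prescribed colour (depending only on $\gamma(z_{n-2}z_{n-1})$ and $\gamma(z_{n-1}z_n)$), exactly one choice of $x_1$ works. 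Undirected regularity: at $x_1\dots x_n$ the condition \eqref{edgesH(n)} selects the unique predecessor of $x_2$ other than $x_1$, and the relation is symmetric, so every vertex lies on exactly one edge.

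The delicate part of $(i)$ — and the step I expect to be the main obstacle — is the absence of digons. If $x_1\dots x_n\to y_1\dots y_n$ and $y_1\dots y_n\to x_1\dots x_n$ are both arcs, then comparing the two shift relations of \eqref{arcsH(n)} forces the walk $x_1x_2x_3\cdots$ to be $2$-periodic, say $x_1\dots x_n=abab\cdots$ with $a\to b\to a$ a $2$-cycle of $G$; one then follows the colour prescription of \eqref{arcsH(n)} around this $2$-cycle and checks that, provided $G$ is loopless and no $1$-factor of $G$ contains a $2$-cycle, it forces a contradiction. This non-degeneracy holds in the intended applications — e.g. $G=K_3$ with its two Hamilton $3$-cycles as $H_1,H_2$, where every $2$-cycle is bichromatic — so I would record it as a hypothesis and conclude that $H_n$ has no digons under it.

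For $(ii)$, let $u=x_1\dots x_n$ and $v=y_1\dots y_n$ be vertices of $H_n$ (we may assume $G$ is strongly connected, so $s<\infty$). Since $v$ is an $n$-walk, $y_2\to y_3\to\cdots\to y_n$ is a walk in $G$; prepending a shortest $G$-walk from $x_n$ to $y_2$, of length $p=\dist_G(x_n,y_2)\le s$, gives a walk $R\colon x_n\to\cdots\to y_2\to y_3\to\cdots\to y_n$ in $G$ of length $p+n-2\le s+n-2$. Now push $R$ through $H_n$ from $u$: at a current vertex $c=c_1\dots c_n$ whose last coordinate is the current endpoint of the traversed part of $R$, the next vertex $q$ of $R$ is one of the two out-neighbours of $c_n$, and appending it costs one arc if the colour prescribed at $c$ by \eqref{arcsH(n)} already points to $q$, and otherwise one edge — which flips the first coordinate of $c$, hence flips whether $\gamma(c_1c_2)=\gamma(c_{n-1}c_n)$, hence flips which out-neighbour of $c_n$ is prescribed, making it $q$ — followed by one arc. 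Thus the $p+n-2$ appending steps cost at most $2(p+n-2)$ in total, and afterwards we are at a vertex $z_1y_2y_3\dots y_n$ with $z_1\in\Gamma^-(y_2)$. If $z_1=y_1$ we are done; otherwise $z_1$ and $y_1$ are the two predecessors of $y_2$, carrying different colours, so a single edge takes us to $v$. Hence $\dist_{H_n}(u,v)\le 2(p+n-2)+1\le 2(s+n)-3$, which proves $(ii)$; note this uses only that $G$ is strongly connected with diameter $s$, with no appeal to iterated-line-digraph diameter formulas.
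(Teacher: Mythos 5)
Your vertex count, the three regularity arguments, and the diameter bound are correct and follow essentially the same route as the paper: the paper also counts $r\cdot 2^{n-1}$ walks, proves $1$-in-regularity by observing that the two in-arcs of a vertex of $G$ carry the two different colours so the colour condition in \eqref{arcsH(n)} singles out a unique in-neighbour, and proves $(ii)$ exactly as you do, by pushing the concatenation of a shortest $x_n\to y_2$ path with $y_2y_3\ldots y_n$ through $H_n$, paying at most two steps per appended vertex (an edge, which flips the prescribed colour, followed by an arc) and one final edge to correct the first coordinate, giving $2s+2(n-2)+1=2(s+n)-3$. Your explanation of why a single edge flips which out-neighbour is prescribed is just a more explicit version of the paper's ``using both types of adjacencies''.

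The one place you stop short is the digon claim, which you yourself flag as the main obstacle and then leave as ``one checks'', under an added hypothesis. The check is short and you should carry it out: a digon forces $u=xyxy\cdots$ and $v=yxyx\cdots$ with $x\rightarrow y\rightarrow x$ a digon of $G$; for $n$ even the first and last arcs of $u$ are both $xy$, so by \eqref{arcsH(n)} the arc $u\to v$ requires $\gamma(yx)=\,$red, and symmetrically $v\to u$ requires $\gamma(xy)=\,$red (the odd case gives the same conclusion). Hence $H_n$ has a digon precisely when $G$ has a digon \emph{both of whose arcs lie in the red factor}; in every other configuration (bichromatic digon, or a digon inside the blue factor) the colour chase ends in a contradiction, which is the paper's argument (it fixes $\gamma(xy)=\,$blue and derives that $\gamma(xy)$ must be red). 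So your extra hypothesis is both misplaced and too strong: looplessness is irrelevant to digons, and forbidding $2$-cycles in \emph{either} factor is more than needed, since only a $2$-cycle inside the factor designated red by \eqref{arcsH(n)} is harmful. On the other hand, your caution is partly vindicated: in that one harmful case $H_n$ really does contain a digon (take $G$ on vertices $1,2,3,4$ with red factor $\{12,21,34,43\}$ and blue factor $\{13,32,24,41\}$; then $1212\cdots$ and $2121\cdots$ form a digon), a case the paper's proof waves away with ``the other cases are similar''. As submitted, though, your argument neither completes the verification nor identifies the correct non-degeneracy condition, so the digon part of $(i)$ is not yet proved.
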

\begin{proof}
$(i)$ Every vertex $x_1\ldots x_n$ of $H_n$ corresponds to a walk of $G$ with first vertex $x_1$, which gives $r$ possibilities and, since $G$ is $2$-regular, for every other $x_i$, $i=2,\ldots,n$, we have $2$ possible options. This provides the value of $N$.\\
To show total $(1,1)$ regularity, it is enough to prove that $H_n$ is $1$-in-regular. Indeed, any vertex adjacent to $x_1x_2\ldots x_n$, with $\gamma(x_{n-1}x_n)$=blue (respectively, $\gamma(x_{n-1}x_n)=$ red) must be of the form 
$yx_1\ldots x_{n-2}x_{n-1}$ with $\gamma(yx_1)\neq \gamma(x_{n-2}x_{n-1})$ (respectively,  with  $\gamma(yx_1)= \gamma(x_{n-2}x_{n-1}))$. But, in both cases, there is only one possible choice for vertex $y$. \\
With respect to the absence of digons, notice that a vertex $\vecu=x_1x_2\ldots x_{n-1}x_n$ belongs to a digon if, after two steps, we come back to $\vecu$, which means that $x_1x_2\ldots x_{n-1}x_n=x_3x_4\ldots x_ny_{n+1}y_{n+2}$ and, hence, $x_i=x_3=\cdots$ and $x_2=x_4=\cdots$. In other words, vertex $\u$ must be of the form $xyxy\cdots xy$ ($n$ even) or $xyxy\cdots x$ ($n$ odd), and $G$ itself must have a digon between vertices $x$ and $y$. Assuming that $n$ is even and $\gamma(xy)$=blue (the other cases are similar), the digon should be
$$
\vecu=xyxy\ldots xy\quad \rightarrow\quad \vecv=yxyx\ldots yx\quad \rightarrow\quad \vecu. 
$$
But the last adjacency is not possible since both the first and last arcs of $\vecv$ would have color $\gamma(yx)=$red and, hence, so should be the color of $xy$, a contradiction.
\\
$(ii)$ Given both vertices $x_1x_2\ldots x_{n-1}x_n$ and $y_1y_2\ldots y_{n-1}y_n$, let us consider a shortest path in $G$ of length at most $s$ from $x_n$ to $y_2$. Then, using both types of adjacencies, we can go from $x_1x_2\ldots x_{n-1}x_n$ to a vertex of the form $z_1\ldots y_2$. From this vertex, we now reach the vertex $yy_2\ldots y_n$ in at most $2(n-2)$ steps. Finally, if necessary,
we can change $y$ by $y_1$. In total, we use $k\le 2s+2(n-2)+1=2(s+n)-3$ steps, as claimed.
\end{proof}

For example, if $G$ is the complete symmetric digraph $K_3$ (edges seen as digons) with vertices in $\mathbb{Z}_3$, blue arcs $i\rightarrow i+1$ and red arcs
$i\rightarrow i-1$ for $i=0,1,2$, the adjacencies of $H_n(K_3)$, with $3\cdot 2^{n-1}$ vertices, are
\begin{align*}
x_1x_2\ldots x_{n-1}x_n & \sim\  y_1x_2\ldots x_{n-1}x_n,\quad y_1\neq x_1,x_2,\\
x_1x_2\ldots x_{n-1}x_n & \rightarrow\ x_2x_3\ldots x_n y_{n+1},\quad y_{n+1}=x_n-(x_2-x_1)(x_n-x_{n-1}).
\end{align*}
Thus, the $(1,1)$-regular mixed graphs $H_3(K_3)$ and $H_4(K_3)$, with diameter $k=5$ and $k=6$, respectively, are shown in Figure \ref{fig:H(3)}.
\begin{figure}[t]
    \begin{center}
        \includegraphics[width=14cm]{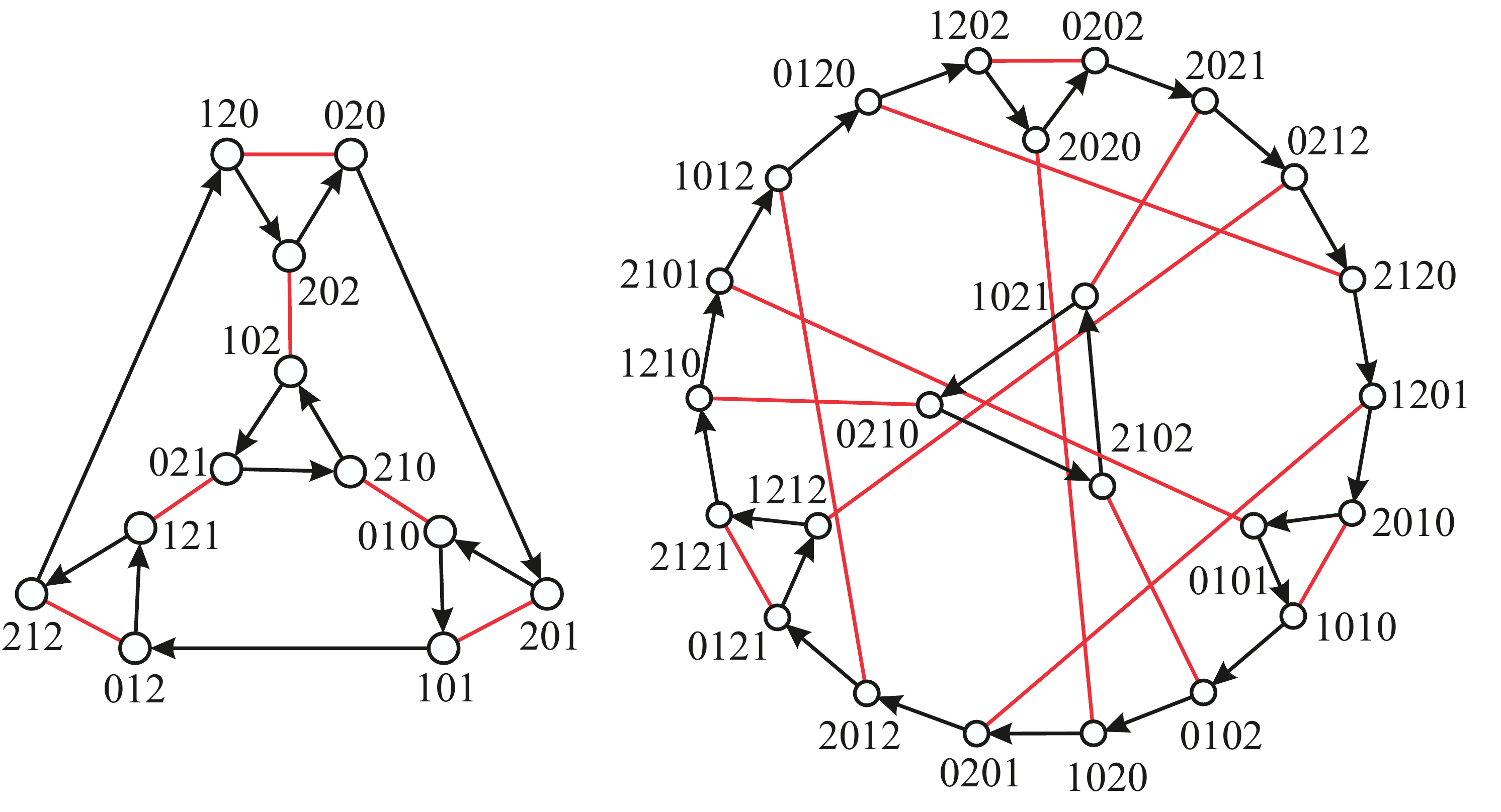}
    \end{center}
    \vskip-.5cm
	\caption{The mixed graphs $H_3(K_3)$ and $H_4(K_3)$.}
	\label{fig:H(3)}
\end{figure}
In this case, when we contract all the edges of $H_n(K_3)$, we obtain the $(n-1)$-iterated line digraph of $K_3$, which, as commented in Introduction, is isomorphic to the Kautz digraph $K(2,n-1)$.

\section{A first computational approach: The $(1,1,k)$-mixed graphs with diameter at most 6}

The Moore bound $M(1,1,k)$ coincides with the number of binary words of length $\ell\leq k$ without consecutive zeroes. In this sense, the corresponding Moore tree can be rooted to a vertex labeled with the empty word. Every vertex labeled with a word $\omega$ (of length $\ell$) with the last symbol different from 0 is joined by an edge to a vertex labeled $\omega 0$ (of length $\ell+1$), for all $0 \leq \ell \leq k-1$. Moreover, the arcs are defined by $\omega \rightarrow \omega 1$ (see an example in Figure \ref{fig:mt}).

This new description of the Moore tree is very useful for performing an exhaustive computational search of the largest mixed graphs for some small values of the diameter $k$. Let $a(\ell)$ be the number of vertices at distance $\ell$ from the root in the Moore tree. Using the above-mentioned labeling, it is easy to see that $a(\ell)$ satisfies the recurrence equation
\begin{equation}
a(\ell)=a(\ell-1)+a(\ell-2),
\end{equation}
with initial conditions $a(0)=1$ and $a(1)=2$. Indeed, $a(\ell)$ is the number of words of length $\ell$ (whose symbols are in the alphabet $\Sigma=\{0,1\}$) without consecutive zeroes. The words of length $\ell$ non-ending with 0 are constructed by a word of length $\ell-1$ by adding $0$. This gives $a(\ell-1)$. Moreover, the words  of length $\ell$ 
ending with $0$ 
are constructed by adding $1$.  This gives $a(\ell-2)=b(\ell)$, 
where $b(\ell)$ is the number of vertices at distance $\ell$ from the root joined by an edge to a vertex at distance $\ell-1$. 
So $b(\ell)$ satisfies the same recurrence relation as $a(\ell)$ but with initial conditions $b(0)=0$ and $b(1)=1$. Finally, let $c(\ell)=a(\ell)-b(\ell)=a(\ell-1)$, that is, the number of vertices at distance $\ell$ from the root pointed by an arc from a vertex at distance $\ell-1$. Again, $c(\ell)$ satisfies the same type of recurrence relation but with initial conditions $c(0)=1$ and $c(1)=1$. Thus, $a(\ell)$, $b(\ell)$, and $c(\ell)$ are all Fibonnaci-like numbers. For instance, $a(\ell)$ equals  the following closed formula 
$$
a(\ell)=\frac{5 + 3\sqrt{5}}{10}\left(\frac{1+\sqrt{5}}{2}\right)^{\ell} + \frac{5 - 3\sqrt{5}}{10}\left(\frac{1-\sqrt{5}}{2}\right)^{\ell}.
$$
Note that the sequence obtained from $a(\ell)$ corresponds to the Fibonacci numbers starting with $a(0)=1$ and $a(1)=2$ (see the sequence A000045 in \cite{OIES}). 
Similar formulas can be obtained for $b(\ell)$ and $c(\ell)$.

Now, we can perform an algorithmic exhaustive search to find all the largest $(1,1,k)$-mixed graphs with order close to the Moore bound. For instance, in the case of almost mixed Moore graphs (with diameter $k$ and order $M(1,1,k)-1$), the number of different cases of mixed graphs to analyze is bounded by ${\cal N}(k)$, where ${\cal N}(k)$ is computed next.
\begin{enumerate}
 \item 
 We remove a vertex in the Moore tree at distance $k$ from the empty word. Notice there are $a(k)$ different choices for this vertex.
 \item 
 Now, we count the number ${\cal N}_1$ of possibilities to complete the undirected part of the mixed graph. We recall that the number of perfect matchings in a complete graph of even order $n$ is $(n-1)!!$ This number ${\cal N}_1$ depends on what  vertex has been removed in the previous step. If the removed vertex has a label ending with $0$, that is, it is a vertex hanging from an edge, then there are $c(k)+1$ vertices in the graph without an incident edge. So, ${\cal N}_1=c(k)!!$  Otherwise,  there are $c(k)-1$ vertices in the graph without an incident edge, so ${\cal N}_1=(c(k)-2)!!$
 \item 
 The number of possibilities to complete the directed part of the graph is upper bounded by the number of mappings from the set of words of length $k$ without fixing points. This is precisely the number of derangements $D_{a(k)}$. Notice that mappings, including assignations from a word of length $k$ to its predecessor, are not valid.
\end{enumerate}

\begin{figure}[t]
	\begin{center}
		\includegraphics[scale=0.4]{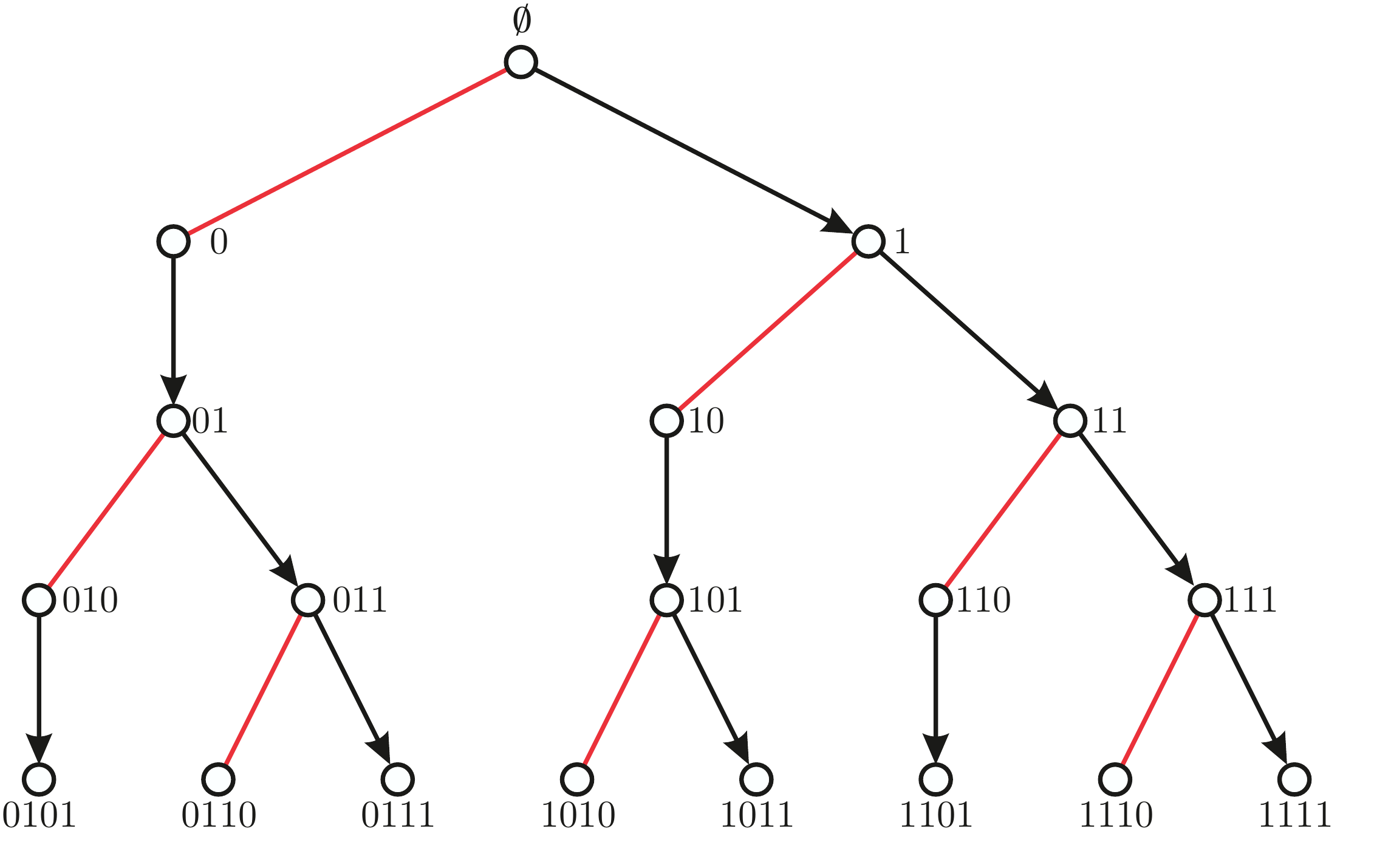}
	\end{center}
\vskip-.5cm
	\caption{The Moore tree with parameters $r=z=1$ and depth $4$ labeled with binary words without consecutive zeros.}
	\label{fig:mt}
\end{figure}

Putting all together, ${\cal N}(k)<D_{a(k)}\left(b(k)c(k)!!+c(k)(c(k)-2)!!\right)$. Of course, ${\cal N}(k)$ grows very fast with $k$ but the number of cases to analyze for $k\leq 4$ is reasonable (see Table \ref{tab:count}).

\begin{table}[t]
\begin{center}
\begin{tabular}{|l||r|r|r|r|}
\hline
$k$ & $3$ & $4$ & $5$ & $6$\\ \hline
$N(k)$ & $396$ & $889980$ & $0$ & $
2\cdot 10^{25}$ \\\hline
\end{tabular}
\end{center}
\caption{Values of ${\cal N}(k)$. Notice that ${\cal N}(5)=0$ because $c(5)$ is an even number.}\label{tab:count}
\end{table}

As a consequence, computing the diameter of the $889980$ putative almost Moore $(1,1)$-mixed graphs with diameter $k=4$, we have the following result. (In fact, this calculation is easily done by using the result by Tuite and Erskhine \cite{te22} that such  graphs are not totally regular.)

\begin{proposition}
There is no almost $(1,1,4)$-mixed Moore graph.
\end{proposition}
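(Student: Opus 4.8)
The statement to be established is that no $(1,1,4)$-mixed graph attains order $M(1,1,4)-1$, i.e. there is no almost mixed Moore graph with $r=z=1$ and diameter $k=4$. The natural route is the exhaustive computer search already set up in the preceding discussion, combined with the theoretical shortcut coming from Tuite and Erskine \cite{te22}. First I would recall that, by \eqref{improved-defect-recur}, the minimal defect $\delta(4)$ of a $(1,1)$-\emph{regular} mixed graph with diameter $4$ is $2$; hence any putative almost mixed Moore graph of diameter $4$ (defect exactly $1$) cannot be totally regular. This means that in the enumeration scheme described above we need only construct those putative graphs obtained by deleting one vertex from the Moore tree, adding a perfect matching on the edge-incident vertices at level $4$, and assigning out-arcs via a derangement of the level-$4$ words — and then simply check whether any of the resulting $889\,980$ candidates has diameter $4$.

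The key steps, in order, are the following. (1) Generate the Moore tree of depth $4$ with the binary-word labelling described in the text, recording for each level-$4$ vertex whether its label ends in $0$ (edge-incident) or in $1$ (arc-incident); there are $a(4)$ such vertices. (2) For each choice of deleted level-$4$ vertex, enumerate the perfect matchings completing the undirected $1$-factor (the count is $c(4)!!$ or $(c(4)-2)!!$ according to the parity case, as in step 2 of the counting argument) and the derangements completing the out-regular digraph part, excluding the forbidden ``point back to predecessor'' assignments; the total number of candidates is the ${\cal N}(4)=889\,980$ from Table \ref{tab:count}. (3) For each candidate mixed graph, compute all pairwise distances (a breadth-first search from every vertex in the mixed adjacency structure suffices) and test whether the diameter equals $4$. (4) Report that in every case the diameter exceeds $4$, which gives the claimed non-existence.

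The main obstacle is not conceptual but lies in ensuring that the search is genuinely \emph{exhaustive} up to isomorphism and that no candidate is silently omitted: one must be careful that the deleted-vertex/matching/derangement parametrisation really does cover all mixed graphs of order $M(1,1,4)-1$ with the correct degree and diameter constraints, and that the distance computation respects the directed nature of arcs (so that $\dist(u,v)\neq\dist(v,u)$ in general is handled correctly). A secondary point worth noting is that the appeal to \cite{te22} is only a convenience that prunes the search space — the enumeration of $889\,980$ candidates is already feasible without it — but it does provide an independent sanity check, since a graph of defect $1$ that happened to realise diameter $4$ would contradict $\delta(4)=2$. Once the code is verified against this consistency check and against the known outcomes for $k=3$ (where $N(3)=396$ candidates likewise yield nothing attaining diameter $3$ with defect $1$, matching $\delta(3)=1$ being attained only by genuine constructions and not by the deleted-tree candidates in the relevant sense), the conclusion follows.
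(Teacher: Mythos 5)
Your proposal matches the paper's own proof: an exhaustive computer check of the ${\cal N}(4)=889980$ candidate graphs obtained by pruning one level-$4$ vertex from the Moore tree and completing the matching and derangement, with the Tuite--Erskine result (equivalently $\delta(4)=2$ from \eqref{improved-defect-recur}) invoked as the same shortcut showing such a graph cannot be totally regular. No substantive difference from the paper's argument.
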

A similar method can be implemented to perform an exhaustive search for orders $M(1,1,k)-\delta$ for small $\delta$. In these cases, the removal of $\delta$ different vertices of the Moore tree (step 1) has many more choices but the number of operations in steps 2 and 3 sometimes is reduced. This is precisely what we do for $n=M(1,1,4)-3=16$, where there are two cases to take into account:
\begin{itemize}
 \item The removal of three distinct words $\omega_1,\omega_2,\omega_3$ of length $4$ (corresponding to three distinct vertices at distance $4$ from the root of the Moore tree).
 \item Given any word $\omega$ of length $3$, the deletion of either the set of words $\{\omega,\omega1,\omega'\}$ (when $\omega$ ends in $0$) or $\{\omega,\omega0,\omega1\}$ (when $\omega$ ends in $1$), where $\omega'\neq \omega1$ is any word of length $4$.
 \end{itemize}

It remains to add the corresponding edges and arcs in the pruned Moore tree. The computational exhaustive search shows there is no $(1,1)$-mixed Moore graph of diameter $4$ and order $16$. Now the maximum order becomes $n=14$ for a mixed graph with parameters $r=z=1$ and $k=4$. There are many more possibilities to prune the Moore tree, so we decide to implement a direct method to perform an exhaustive search in this case: taking the perfect matching with a set of vertices $V=\{0,1,\dots,13\}$ and where $i \sim i+1$ for all even $i$, we add the three arcs $(0,2),(1,5)$ and $(5,7)$. Looking at vertex $0$ as the root of the Moore tree, the existence of these three arcs in the mixed graph is given because $\delta=5$ in this case. Now we proceed with the exhaustive search by adding the remaining arcs in the graph. There are $11!$ possibilities but excluding avoided permutations (those permutations with elements of order at most $2$ or including edges of the perfect matching) significantly reduces the number of cases to analyze. After computing the diameter of all these mixed graphs and keeping those non-isomorphic mixed graphs with diameter $k=4$, we have the following result. 

\begin{proposition}
\label{k=4}
The maximum order for a $(1,1)$-mixed regular graph of diameter $k=4$ is $14$. There are $27$ of such mixed graphs (see Table \ref{tab:optn14}), and only one of them is a Cayley graph. Namely, that of the dihedral group $D_7$ with generators $r$ and $s$, and presentation $\langle r,s\, |\, r^7\!=\!s^2\!=\!(rs)^2\!=\!1 \rangle$, also obtained as the line digraph of $C_7$, see the mixed graph at the top left in Figure \ref{fig3}.
\end{proposition}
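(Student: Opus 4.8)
The plan is to read the statement as a certified finite computation and to split it into three pieces: a parity/defect reduction that bounds the order, the exhaustive searches that pin the optimum at $14$ and give the count $27$, and an explicit model that exhibits the unique Cayley graph. For the upper bound, the key observations are that since the graph is $(1,1)$-regular its undirected edges form a $1$-regular graph, i.e.\ a perfect matching, so $N$ is even, and that the Tuite--Erskine defect recurrence \eqref{improved-defect-recur} gives $\delta(4)=2$, whence $N\le M(1,1,4)-2=17$ and by parity $N\le 16$ (equivalently, $N=18=M(1,1,4)-1$ is excluded because there is no almost $(1,1,4)$-mixed Moore graph, and $N=17,15$ by parity). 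So the whole burden reduces to ruling out $N=16$, which is exactly the exhaustive search over Moore trees of defect $3$ sketched above: root the putative graph at a vertex, enumerate the finitely many deletions of three vertices of the depth-$4$ Moore tree (three level-$4$ leaves, or the structured triples $\{\omega,\omega1,\omega'\}$ and $\{\omega,\omega0,\omega1\}$), and for each pruned tree sweep over all completions of the matching on the still-unmatched vertices and all predecessor-avoiding arc assignments, checking whether the diameter equals $4$; none does, so $N\le 14$.

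Next I would establish realizability and the classification. Fixing $N=14$ forces defect $\delta=M(1,1,4)-14=5$; rooting at a vertex, the smallness of the defect forces the arcs close to the root, and after relabelling one may take $V=\{0,\dots,13\}$ with matching $i\sim i+1$ for even $i$ and with the arcs $(0,2),(1,5),(5,7)$ present. The remaining arc set is then a permutation of the other $11$ vertices; discarding the inadmissible ones (permutations with a fixed point or a transposition, or reusing a matching edge as an arc) leaves a tractable list, and computing the diameter of each survivor and grouping the non-isomorphic ones of diameter $4$ yields exactly the $27$ graphs of Table \ref{tab:optn14}. Note each automatically has diameter exactly $4$, since a $(1,1)$-mixed graph of diameter $\le 3$ has at most $M(1,1,3)=11<14$ vertices.

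Finally I would single out the Cayley member. For each of the $27$ graphs one computes $\Aut$ and tests for a regular subgroup of order $14$; exactly one graph admits such a subgroup, and it is dihedral. To display this graph, take the symmetric $7$-cycle $C_7$ on $\mathbb{Z}_7$ and form its line digraph: the $14$ vertices are the arcs $(i,i\pm1)$, each opposite pair $(i,j),(j,i)$ collapsing to an edge and each arc $(i,j)$ carrying the single remaining out-arc to $(j,2j-i)$, so $L(C_7)$ is a $(1,1)$-mixed graph. The dihedral group $D_7$ acts on the $7$ vertices of $C_7$, hence freely and transitively on its $14$ arcs; writing $r$ for a rotation and $s$ for the reflection fixing the edge $\{0,1\}$, the base arc $(0,1)$ has edge-neighbour $s\cdot(0,1)$ and arc-out-neighbour $r\cdot(0,1)$, which gives $L(C_7)\cong\Cay(D_7,\{r,s\})$ with $\langle r,s\mid r^7=s^2=(rs)^2=1\rangle$, and one checks this is the distinguished graph in Figure \ref{fig3}.

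The hard part is not any single manipulation but the completeness and correctness of the finite searches: one must verify that the Moore-tree pruning really enumerates every defect-$3$ (respectively defect-$5$) configuration up to isomorphism, that the symmetry reductions shrinking the $11!$ arc assignments to a feasible size lose no extremal graph, and that the diameter computations are exact. The genuine mathematical content lies precisely in arranging the search so that both the negative statement ($N\neq16$) and the positive one (exactly $27$ graphs of order $14$, with a unique Cayley among them) are certified rather than merely suggested by the computation.
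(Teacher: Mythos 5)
Your proposal is correct and follows essentially the same route as the paper: the defect/parity reduction to the single critical order $16$, the exhaustive Moore-tree pruning to exclude it, the direct search at order $14$ with the fixed matching and forced arcs $(0,2),(1,5),(5,7)$ yielding the $27$ non-isomorphic graphs, and the identification of the unique Cayley member as $\Cay(D_7,\{r,s\})\cong L(C_7)$. The only cosmetic difference is that you invoke $\delta(4)=2$ from the Tuite--Erskine recurrence where the paper also runs the explicit defect-$1$ search (its Proposition on the nonexistence of an almost $(1,1,4)$-mixed Moore graph), but the paper itself notes these are interchangeable.
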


\begin{table}[t]
\begin{center}
\begin{scriptsize}
\begin{tabular}{|c|c|c|}
\hline
\text{\texttt{MW?H??GC@{\char`\_}?EAO??E?{\char`\_}O?B@{\char`\_}??L{\char`\_}??W?@{\char`\_}}}
 & \text{\texttt{MW?H??K??{\char`\_}QC??o?I??oCC??oGH@?ACH??}} & \text{\texttt{MW?H??K??{\char`\_}QC??o?@c?{\char`\_}CC??oE?I??EH??}}  \\
 \text{\texttt{MW?H??K??{\char`\_}QC??o?B?A{\char`\_}CC??sC?AAACH??}} & \text{\texttt{MW?H??K??{\char`\_}QC??o?BC?{\char`\_}CC??oCIA??K@{\char`\_}?}} & \text{\texttt{MW?H??G@@{\char`\_}?E?OA?E?{\char`\_}O?B?oG?OCG?KE??}} \\
 \text{\texttt{MW?H??G@@{\char`\_}?E?OA?E?{\char`\_}O?B@{\char`\_}G?O?W?WE??}} & \text{\texttt{MW?H??G@@{\char`\_}?E?OA?E?{\char`\_}O?B{\char`\_}{\char`\_}?OO?W?WE??}} & \text{\texttt{MW?H??G@@{\char`\_}?EAO??E?{\char`\_}O?B?{\char`\_}{\char`\_}OO?W?WE??}} \\
 \text{\texttt{MW?H??G@@{\char`\_}?EAO??E?{\char`\_}O?B?{\char`\_}c?O?W?WAO?}} & \text{\texttt{MW?H??G@@{\char`\_}?EAO??E?{\char`\_}O?B@{\char`\_}??W?W?WE??}} & \text{\texttt{MW?H??G@@{\char`\_}?EAO??E?{\char`\_}O?B@{\char`\_}??X?G?WA@?}} \\
 \text{\texttt{MW?H??G@@{\char`\_}?EAO??E?{\char`\_}O?BO{\char`\_}??W?W?WAO?}} & \text{\texttt{MW?H??G@@{\char`\_}?EAO??E?{\char`\_}O?BO{\char`\_}??WCG?WA@?}} & \text{\texttt{MW?H??GC@{\char`\_}?E?OA?E?{\char`\_}O?B?o?OD{\char`\_}?{\char`\_}G?@{\char`\_}}} \\
\text{\texttt{MW?H??GC@{\char`\_}?E?OA?E?{\char`\_}O?B@{\char`\_}?CD{\char`\_}?{\char`\_}G?@{\char`\_}}} & \text{\texttt{MW?H??GC@{\char`\_}?E?OA?E?{\char`\_}O?B@{\char`\_}G?D{\char`\_}??W?@{\char`\_}}} & \text{\texttt{MW?H??GC@{\char`\_}?E?OA?E?{\char`\_}O?B{\char`\_}{\char`\_}?OD{\char`\_}??W?@{\char`\_}}} \\
\text{\texttt{MW?H??GC@{\char`\_}?E?P??E?{\char`\_}O?B{\char`\_}{\char`\_}??L{\char`\_}??K?O{\char`\_}}} & \text{\texttt{MW?H??GC@{\char`\_}?EAO??E?{\char`\_}O?B?o??Kc??KC?{\char`\_}}} &
\text{\texttt{MW?H??K??OQG?@{\char`\_}?E?OO?B{\char`\_}{\char`\_}C?O?IAG@@?}} \\
\text{\texttt{MW?H??K??{\char`\_}Q@?@{\char`\_}?E?OO?BO{\char`\_}??WGG?WH??}} & \text{\texttt{MW?H??K??{\char`\_}U??@{\char`\_}?E?OO?B?{\char`\_}{\char`\_}{\char`\_}O?W?WH??}} & \text{\texttt{MW?H??K??{\char`\_}U??@{\char`\_}?E?OO?B?{\char`\_}g?O?W?W@{\char`\_}?}} \\
\text{\texttt{MW?H??K??`A@?@{\char`\_}?E?OO?BO{\char`\_}??MO?AG?C{\char`\_}}} & \text{\texttt{MW?H??GO@{\char`\_}?E?OO?B?{\char`\_}O?BW???MA?@G?C{\char`\_}}} & \text{\texttt{MW?H??GC@{\char`\_}?E?OO?A{\char`\_}{\char`\_}O?B{\char`\_}?OK?c?OG?@{\char`\_}}} \\
 \hline
\end{tabular}
\end{scriptsize}
\end{center}
\caption{All $27$ mixed graphs with largest order for $r=z=1$ and diameter $k=4$ given as an ASCII string encoded in \emph{digraph6} representation (see McKay and  Piperno \cite{nauty}). The first one in the list is precisely the Cayley graph depicted in the top left of Figure \ref{fig3}.}\label{tab:optn14}
\end{table}

\begin{figure}[t]
	\begin{center}
		\includegraphics[width=14cm]{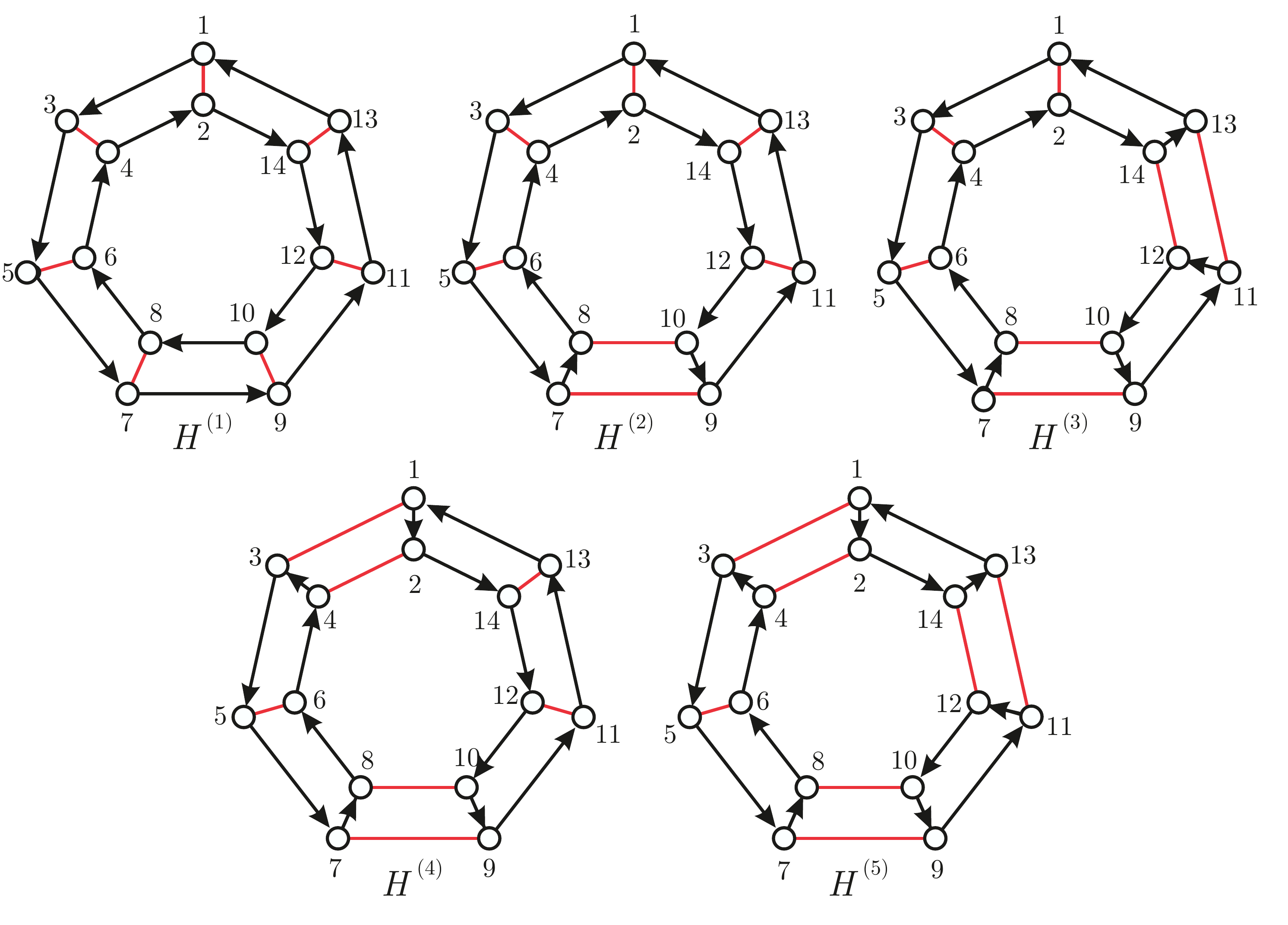}
	\end{center}
	\vskip-.5cm
	\caption{Some cospectral largest mixed graphs with diameter $k=4$ and degrees $r{=}z{=}1$.}
	\label{fig3}
\end{figure}
The spectra of all $27$ mixed graphs with the largest order can be described with the help of the (complex) roots $\alpha_{ij}$ of the irreducible polynomials $p_i(x) \in \mathbb{Q}(x)$ given below:
\[
\begin{array}{lll}
p_1(x)=x^4 + x^3 - 2x^2 - x + 2, \\
p_2(x)=x^3 + x^2 - 2x - 1, \\
p_3(x)=x^3-x+1, \\
p_4(x)=x^3 + 2x^2 - x - 3, \\
p_5(x)= x^4 + x^3 - x^2 - x + 1, \\
p_6(x)=x^6 + x^5 - 3x^4 - x^3 + 5x^2 - 4, \\
p_7(x)=x^9 + 3x^8 - 6x^6 + 2x^5 + 11x^4 - 3x^3 - 9x^2 + 3x + 3, \\
p_8(x)=x^6 + x^5 - 3x^4 - 2x^3 + 5x^2 + 2x - 3, \\
p_9(x)=x^6 + x^5 - x^4 + 3x^2 - 1.
\end{array}
\]

\begin{table}[htb]
\begin{center}
\begin{tabular}{|c|l|}
\hline
Number of graphs & Spectra \\
\hline
9 & $\{2^1,0^6,\alpha_{1j}^1,\alpha_{2s}^1\}$  for $j=1,2,3,4$ and $s=1,2,3$\\
6 & $\{2^1,-1^1,1^1,0^5,\alpha_{3j}^1,\alpha_{4j}^1\}$  for $j=1,2,3$\\
5 & $\{2^1,0^7,\alpha_{2j}^2\}$  for $j=1,2,3$\\
4 & $\{2^1,0^3,\alpha_{5j}^1,\alpha_{6k}^1\}$  for $j=1,2,3,4$ and $k=1,\ldots,6$\\
2 & $\{2^1,1^1,0^3, \alpha_{7j}^1\}$  for $j=1,\ldots,9$\\
1 & $\{2^1,0^1,\alpha_{8j}^1,\alpha_{9j}^1\}$  for $j=1,\ldots,6$\\
\hline
\end{tabular}
\end{center}\caption{Classification of the largest mixed graphs for $r=z=1$ and diameter $k=4$ according to their spectra. The third row gives the spectrum of the 5 cospectral graphs in Figure \ref{fig3}.}
\end{table}

\section{A second computational approach: Cayley or lift $(1,1,k)$-mixed graphs with small diameter}

To obtain the results of this section, we followed a different strategy. We mainly concentrate our search on looking at large $(1,1,k)$-mixed graphs that are either Cayley or lift graphs. Let us first recall these two classes of graphs.

Given a finite group $\Omega$ with generating set $S\subseteq \Omega$, the {\em Cayley graph}
$\Cay (\Omega,S)$ has vertices representing the elements of $\Omega$, and arcs from $\omega$ to $\omega s$ for every $\omega\in \Omega$ and $s\in S$.
Notice that if $s,s^{-1}\in S$, then we have an edge (as two opposite arcs) between $\omega$ and $\omega s$.
Thus, if $S=S_1\cup S_2$ where $S_1=S_1^{-1}$ and $S_2\cap S_2^{-1}=\emptyset$, the Cayley graph $\Cay(\Omega,S)$ is an $(r,z)$-mixed graph with undirected degree $r=|S_1|$ and directed degree $z=|S_2|$.

Given a digraph $G$, or {\em base graph}, and a finite group $\Omega$ with generating set $S$, a {\em voltage assignment $\alpha$} is a mapping $\alpha:E\rightarrow S$, that is, a labeling of the arcs with the elements of $S$.
Then, the {\em lift digraph} $G^\alpha$ has vertex set  $V(G^\alpha)=V\times \Omega$ and
arc set $E(G^\alpha)=E\times S$, where  there is an arc from vertex $(u,g)$ to vertex $(v,g\alpha(uv))$ if and only if $uv\in E$.
In particular, the Cayley digraph $\textrm{Cay}(\Omega,S)$ with $S=\{g_1,\ldots,g_{r}\}$ can be seen as the lifted digraph
$G^\alpha$, where $G=K_1^{r}$ (a singleton with $V=\{u\}$ and $E=\{e_1,\ldots,e_{r}\}$ are $r$ loops) and voltage assignment
$ \alpha(e_i)=g_i$ for $i=1,\ldots,r$.
An example of a lift digraph is shown in Figure \ref{fig:lift}.

\begin{figure}[t]
	\begin{center}
		\includegraphics[width=10cm]{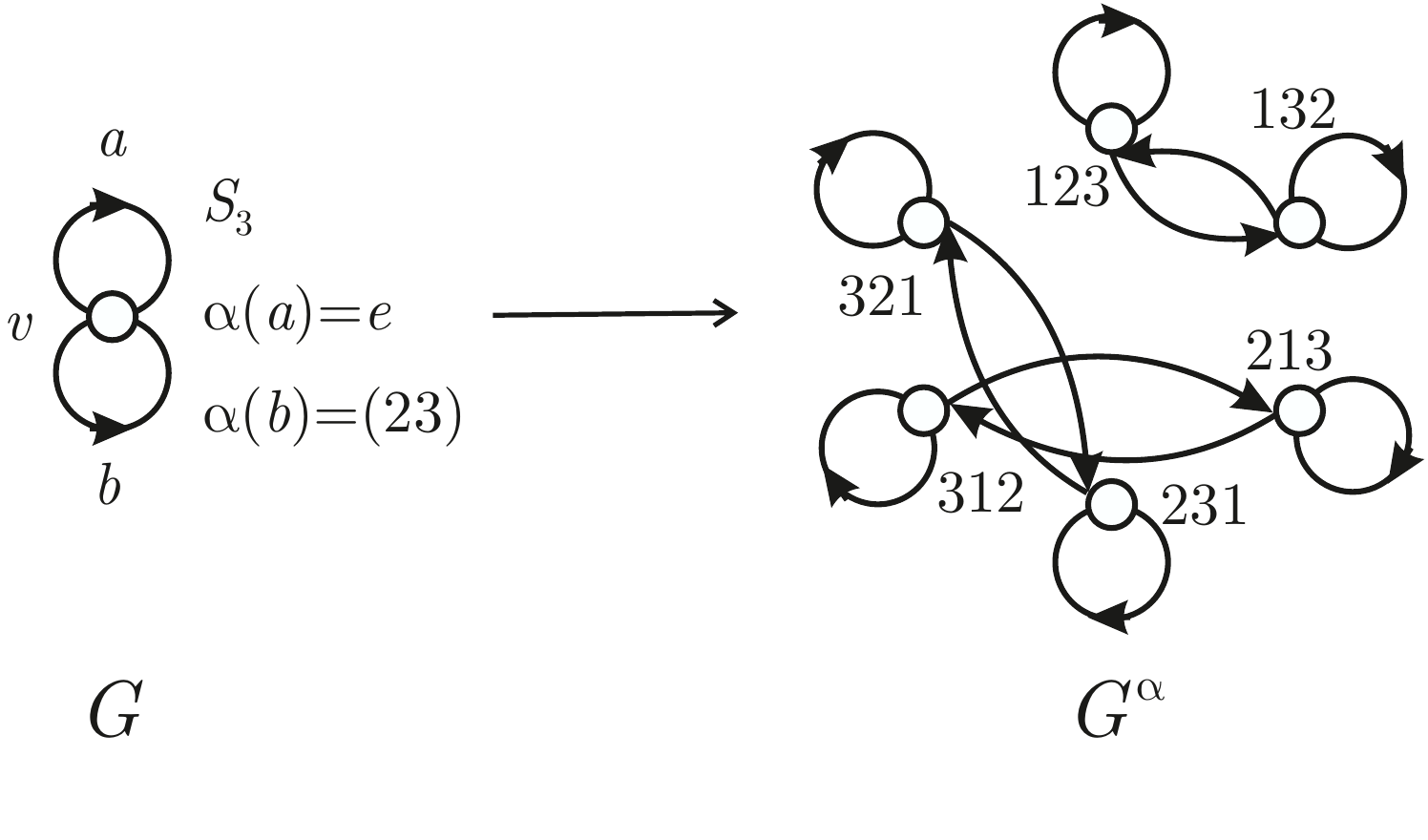}
	\end{center}
 \vskip-1cm
	\caption{A base graph with voltages on the symmetric group $S_3$, and its lift graph.}
	\label{fig:lift}
\end{figure}

The results obtained by computer search are shown in Table \ref{table4}, see next section. In what follows, we comment upon some of the cases.
Notice that for diameter $k=2,3,4$, the known $(1,1,k)$-mixed graphs have the maximum possible order. 
The mixed graph of diameter $k=2$ is the Kautz digraph $K(2,2)$.
The graph with $k=3$ is isomorphic to the line digraph of the cycle $C_5$. Some of the maximal graphs with diameter $k=4$ were already shown in Figure \ref{fig3}.

Two maximal graphs of diameter $k=5$ are shown in Figure \ref{fig4}. 

\begin{figure}[t]
	\begin{center}
		\includegraphics[width=14cm]{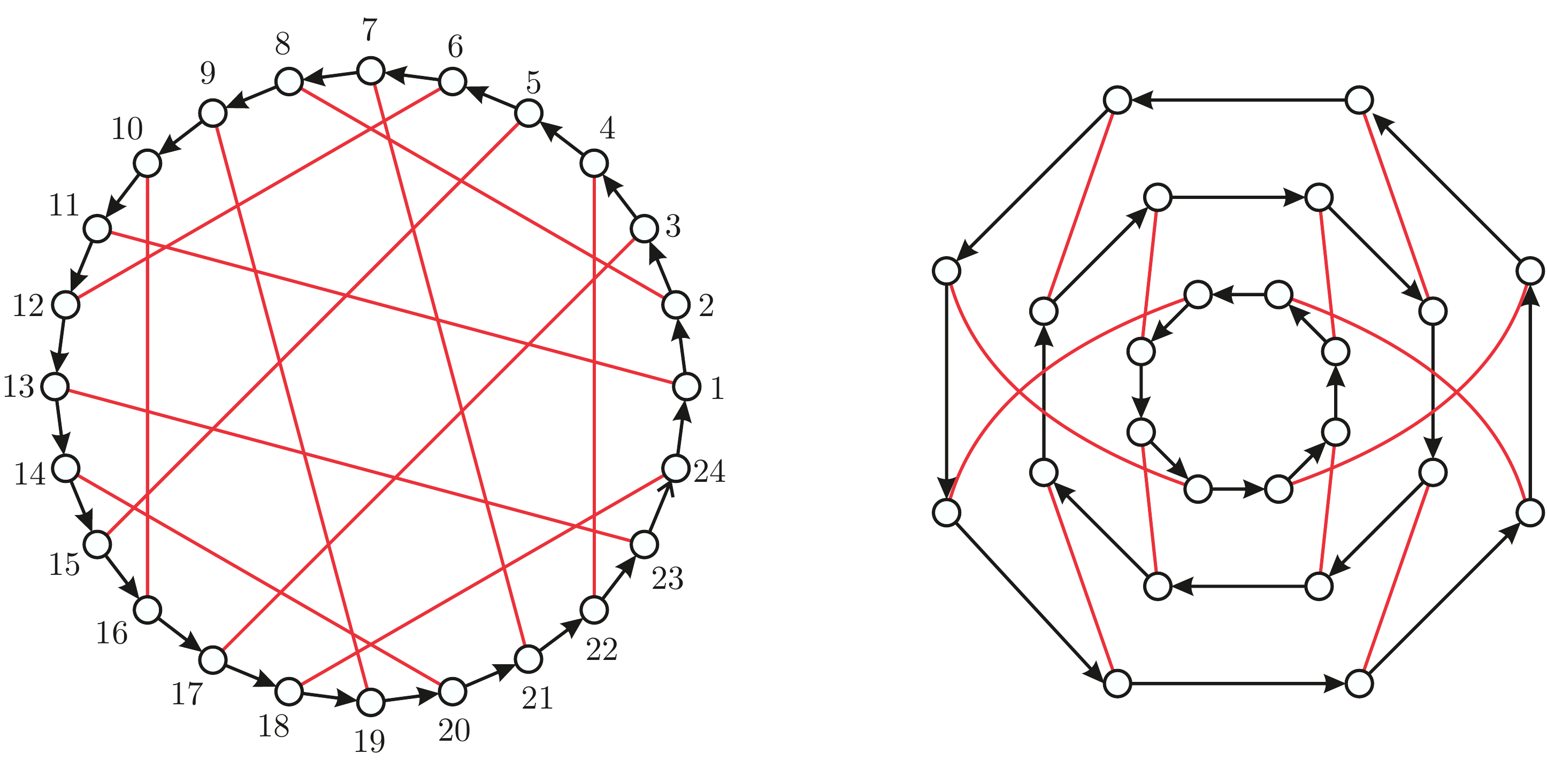}
	\end{center}
	\vskip-.5cm
	\caption{Two $(1, 1, 5)$-mixed graphs with defect 8 and order 24.}
	\label{fig4}
\end{figure}


The graph of order $72$ listed in the table for $k=8$ is a lift graph using
the dihedral group $D_{18}$ of order $18$.  This group consists of the $18$
symmetries of the nonagon. To describe our graph, we consider a
regular nonagon whose vertices are labeled $0$ to $8$ in clockwise
order.  Label the elements of $D_{18}$ as follows.
There are nine counter-clockwise rotations, each through an angle $2 \pi k/9$
and denoted {\tt Rot(k)}, for $0 \leq k < 9$.  Finally, there are the
nine reflections {\tt Ref(k)} about the line through vertex $k$ and
the midpoint of the opposite side.  This notation is
used to specify the voltages on the edges and arcs of the base graph
shown in  Figure \ref{fig:k=8}.
\begin{figure}[t]
\centering
\begin{tikzpicture}[scale=1.35]
\GraphInit[vstyle=Normal]
\tikzset{->-/.style={
        decoration={
            markings,
            mark=at position #1 with {\arrow[scale=2]{>}}
        },
        postaction={decorate}
    },
    ->-/.default=0.9
}
\tikzset{
    LabelStyle/.style = {rectangle, rounded corners, draw,
                        minimum width = 2em, fill = white,
                        text = black, font = \bfseries },
    VertexStyle/.append style = {
        inner sep=4pt,
        ultra thick,
        font=\Large\bfseries,
        fill=white
    }
} 
\Vertex[x=0,y=0]{D}
\Vertex[x=3,y=0]{C}
\Vertex[x=3,y=3]{B}
\Vertex[x=0,y=3]{A}
\tikzset{
    EdgeStyle/.append style = {
        ultra thick
    }
} 
\Edge[label=Rot(3)](A)(D)
\Loop[dist=15mm,dir=EA,style={ultra thick},label=Ref(1)](B)
\Loop[dist=15mm,dir=EA,style={ultra thick},label=Ref(4)](C)
\tikzset{
    EdgeStyle/.append style = {
        ->-, ultra thick
    }
} 
\Edge[label=Rot(8)](A)(B)
\Edge[label=Ref(2)](B)(C)
\Edge[label=Ref(0)](C)(D)
\Loop[dist=15mm,dir=WE,style={->-,ultra thick},label=Rot(4)](D)
\end{tikzpicture}
\vskip-1cm
\caption{The base graph of the $(1,1,8)$-mixed graph of order $72$.}
\label{fig:k=8}
\end{figure}

The graph of order $544$ for $k=13$ is a lift of the base graph shown in Figure~\ref{fig:k=13} with voltages in the group $\mathbb{Z}_{17}:\mathbb{Z}_8$.

\begin{figure}[t]
    \centering
    \begin{tikzpicture}[x=0.2mm,y=0.2mm,ultra thick,vertex/.style={circle,draw,minimum size=10,fill=lightgray}]
	\node at (-70.7,70.7) [vertex] (v1) {};
	\node at (70.7,70.7) [vertex] (v2) {};
	\node at (70.7,-70.7) [vertex] (v3) {};
	\node at (-70.7,-70.7) [vertex] (v4) {};
	\draw [->] (v1) to (v2);
	\draw [bend left,red] (v1) to (v2);
	\draw [->] (v2) to (v3);
	\draw [->] (v3) to (v4);
	\draw [bend left,red] (v3) to (v4);
	\draw [->] (v4) to (v1);
    \end{tikzpicture}
    \caption{The base graph of the $(1,1,13)$-mixed graph of order $544$.}
    \label{fig:k=13}
\end{figure}
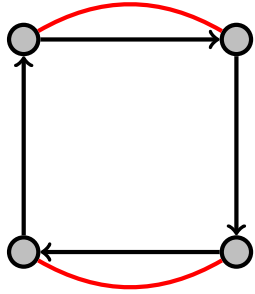




The remaining graphs are partially identified as notes following Table \ref{table4}. Where a graph is identified as a lift using a voltage group of order half the order of the graph, the base graph is an undirected edge together with a directed loop at each vertex.
A complete description of such larger graphs, especially those that use unfamiliar groups, would take a lot of pages. The interested reader can address the third author to request more information.
%
%
%

\section{Table of large $(1,1,k)$-mixed graphs}

A summary of the results for a $(1,1)$-regular mixed graphs with diameter $k$ at most $16$  is shown in Table \ref{table4}, where the lower bounds come from the mentioned constructions. Moreover, the upper bounds follow by Proposition \ref{k=4} $(k=4)$, a computer exploration $(k=5)$, and the numbers $M(1,1,k)-\delta(k)$ with $\delta(k)$ given in \eqref{improved-defect-recur} and adjusted even parity (since $r=1$, the graph contains a perfect matching and, so, it must have even order), see Tuite and Erskine \cite{te22}.


\begin{table}[ht]
\begin{center}
\begin{tabular}{|r||r|r|r|l|} \hline
$k$ & Lower bound & Upper bound & Moore $M(1,1,k)$ & Notes \\ \hline \hline
 2 &  6 &  6 &  6 & \\ \hline
 3 & 10 & 10 & 11 & \\ \hline
 4 & 14 & 14 & 19 & \\ \hline
 5 & 24 & 26 & 32 & \\ \hline
 6 & 34 & 48 & 53 & \\ \hline
 7 & 54 & 78 & 87 & Cayley\footnote{Cayley graph on SmallGroup(54,6): $Z_9 : Z_6$.} \\ \hline
 8 & 72 & 126 & 142 & Lift\footnote{Lift group is the dihedral group of order 18.} \\ \hline
 9 & 112 & 206 & 231 & Lift\footnote{Lift group is $AGL(1,8)$.}  \\ \hline
10 & 144 & 336 & 375 & Cayley\footnote{Cayley graph on SmallGroup(144,182).}  \\ \hline
11 & 240 & 544 & 608 & Lift\footnote{Lift group $A_5\times Z_2$}  \\ \hline
12 & 336 & 882 & 985 & Lift\footnote{Cayley graph on $PSL(2,7):Z_2$.}  \\ \hline
13 & 544 & 1428 & 1595 & Lift\footnote{Lift group $Z_{17} : Z_{8}$} \\  \hline
14 & 800 & 2312 & 2582 & Cayley\footnote{group SmallGroup(800,1191).}  \\ \hline
15 & 1024 & 3744 & 4179 & Lift\footnote{Lift group SmallGroup(512,1727)} \\ \hline
16 & 1600 & 6058 & 6763 & Lift\footnote{Lift group SmallGroup(800,1191)}\\
\hline
\end{tabular}
\caption{Bounds for mixed graphs with $(r,z,k) = (1,1,k)$.}
\label{table4}
\end{center}
\end{table}

\begin{enumerate}
\item 
Cayley graph on SmallGroup(54,6): $\mathbb{Z}_9 : \mathbb{Z}_6$. 
\item 
Lift group is the dihedral group of order 18.
\item 
Lift group is $AGL(1,8)=(\Z_2^3):\Z_7$.
\item 
Cayley graph on SmallGroup(144,182).
\item 
Lift group is $A_5\times \mathbb{Z}_2$.
\item 
Cayley graph on $PSL(2,7):\mathbb{Z}_2$.
\item 
Lift group is $\mathbb{Z}_{17} : \mathbb{Z}_{8}$.
\item 
Cayley graph on SmallGroup(800,1191).
\item 
Lift group is SmallGroup(512,1727).
\item 
Lift group is SmallGroup(800,1191).
\end{enumerate}

\section{Statements \& Declarations}

\subsection{Funding}

The research of C. Dalf\'o, M. A. Fiol, N. L\'opez, and A. Messegu\'e has been supported by
AGAUR from the Catalan Government under project 2021SGR00434 and MICINN from the Spanish Government under project PID2020-115442RB-I00. The research of M. A. Fiol was also supported by a grant from the  Universitat Polit\`ecnica de Catalunya with references AGRUPS-2022 and AGRUPS-2023. J. Tuite was supported by EPSRC grant EP/W522338/1.

\subsection{Competing Interests}

The authors have no relevant financial or non-financial interests to disclose.

\subsection{Author Contributions}

All authors contributed to the study's conception and design. Material preparation, data collection, and analysis were performed by all the authors, after much work was done. All authors contributed to the first draft of the manuscript, which was improved by all of them. All authors read and approved the final manuscript.

\subsection{Data availability}

The datasets generated during and/or analyzed during the current study are available from the corresponding author upon reasonable request.


\end{document}